\newtheorem{theorem}{Theorem}
\newtheorem{lemma}[theorem]{Lemma}
\newtheorem{proposition}[theorem]{Proposition}
\newtheorem{lettertheorem}{Theorem}
\newtheorem{letterlemma}[lettertheorem]{Lemma}
\theoremstyle{definition}
\theoremstyle{remark}
\numberwithin{equation}{section}
\newcommand{\B}{\mathcal{B}}
\newcommand{\D}{\mathbb{D}}
\newcommand{\DD}{\widehat{\mathcal{D}}}
\newcommand{\Dd}{\widecheck{\mathcal{D}}}
\newcommand{\DDD}{\mathcal{D}}
\newcommand{\N}{\mathbb{N}}
\newcommand{\RR}{\mathbb{R}}
\newcommand{\C}{\mathbb{C}}
\newcommand{\e}{\varepsilon}
\newcommand{\ep}{\varepsilon}
\renewcommand{\phi}{\varphi}
\newcommand{\M}{\mathcal{M}}
\def\BMO{\mathord{\rm BMO}}
\def\MO{\mathord{\rm MO}}
\def\BO{\mathord{\rm BO}}
\def\BA{\mathord{\rm BA}}
\def\a{\alpha}       \def\b{\beta}        \def\g{\gamma}
           \def\e{\varepsilon}
     \def\om{\omega}      
       \def\t{\theta}       
                  \def\z{\zeta}
\def\Inv{{\mathcal Inv}}
\renewcommand{\H}{\mathcal{H}}
\newenvironment{Prf}{\noindent{\emph{Proof of}}}
{\hfill$\Box$ }
\begin{document}
\title[Bergman projection and BMO in hyperbolic metric]{Bergman projection and BMO in hyperbolic metric -- improvement of classical result}

\keywords{Bergman projection, Bergman metric, Bergman space, Bloch space, doubling weight, Hankel operator, mean oscillation.}

\thanks{The authors are indebted to Kehe Zhu for explaining how the special case $p=2$ works in the setting of the standard weighted Bergman spaces. The authors would also like to thank Antti Per\"al\"a for enlightening conversations on the topic.}

\thanks{The research was supported in part by Ministerio de Econom\'{\i}a y Competitividad, Spain, projects PGC2018-096166-B-100; La Junta de Andaluc{\'i}a, projects FQM210 S and UMA18-FEDERJA-002, and Vilho, Yrj\"o ja Kalle V\"ais\"al\"a foundation of Finnish Academy of Science and Letters.}

\author{Jos\'e \'Angel Pel\'aez}
\address{Departamento de An\'alisis Matem\'atico, Universidad de M\'alaga, Campus de
Teatinos, 29071 M\'alaga, Spain} 
\email{japelaez@uma.es}

\author[Jouni R\"atty\"a]{Jouni R\"atty\"a}
\address{University of Eastern Finland\\
Department of Physics and Mathematics\\
P.O.Box 111\\FI-80101 Joensuu\\
Finland}
\email{jouni.rattya@uef.fi}

\subjclass[2010]{Primary 30H20, 30H35, 47B34}

\date{\today}

\maketitle

\begin{abstract}
The Bergman projection $P_\alpha$, induced by a standard radial weight, is bounded and onto from $L^\infty$ to the Bloch space $\mathcal{B}$. However, $P_\alpha: L^\infty\to \mathcal{B}$ is not a projection. This fact can be emended via the boundedness of the operator $P_\alpha:\BMO_2(\Delta)\to\mathcal{B}$, where $\BMO_2(\Delta)$ is the space of functions of bounded mean oscillation in the Bergman metric.

We consider the Bergman projection $P_\omega$ and the space $\BMO_{\omega,p}(\Delta)$ of functions of bounded mean oscillation induced by $1<p<\infty$ and a radial weight $\omega\in\mathcal{M}$. Here $\mathcal{M}$ is a wide class of radial weights defined by means of moments of the weight, and it contains the standard and the exponential-type weights. We describe the weights such that $P_\omega:\BMO_{\omega,p}(\Delta)\to\mathcal{B}$ is bounded. They coincide with the weights for which $P_\omega: L^\infty \to \mathcal{B}$ is bounded and onto. This result seems to be new even for the standard radial weights when $p\ne2$.
\end{abstract}

\section{Introduction and main results}

It is well-known that the Bergman projection $P_\alpha$, induced by the standard weight $(\a+1)(1-|z|^2)^\alpha$, is bounded and onto from $L^\infty$ to the Bloch space $\B$~\cite[Section~5.1]{Zhu}. This is a very useful result with a large variety of applications in the operator theory on spaces of analytic functions on $\D$. However, the operator $P_\alpha:L^\infty\to\B$ is in fact not a projection because of the strict inclusion $H^\infty\subsetneq\B$. This downside 
can be emended by replacing $L^\infty$ by the space $\BMO_2(\Delta)$ of functions of bounded mean oscillation in the Bergman metric~\cite[Section~8.1]{Zhu}. It is known that the analytic functions in $\BMO_2(\Delta)$ constitute the Bloch space~$\B$~\cite[Theorem~8.7]{Zhu}, and it is a folklore result that $P_\alpha:\BMO_2(\Delta)\to\B$ is bounded.
Professor Kehe Zhu kindly offered us the following proof: 

If $f\in\BMO_2(\Delta)$, then the big Hankel operators $H^{\alpha}_f(g)=(I-P_\alpha)(fg)$ and 
$H^{\alpha}_{\overline{f}}(g)=(I-P_\alpha)(\overline{f}g)$ are both bounded on the Bergman space $A^2_\alpha$ by~\cite[Section~8.1]{Zhu}, and therefore so are the little Hankels $h^\alpha_f(g)=\overline{P_\alpha}(fg)$ and $h^\alpha_{\overline{f}}(g)=\overline{P_\alpha}(\overline{f}g)$. Now that $h^\alpha_{\overline f}=h^\alpha_{\overline{P_\alpha(f)}}$, and the little Hankel operator $h^{\alpha}_{\overline{\varphi}}$, induced by an analytic symbol $\varphi$, is bounded on $A^2_\alpha$ if and only if $\varphi\in\B$ by \cite[Section~8.7]{Zhu}, it follows that $P_\alpha(f)\in\B$, whenever $f\in\BMO_2(\Delta)$. Since this argument preserves the information on the norms, it follows that $P_\alpha:\BMO_2(\Delta)\to\B$ is bounded.
 
In this paper we are interested in understanding the nature of a space $X$ of complex-valued functions such that $X\cap\H(\D)=\B$, and radial weights $\omega$ for which the Bergman projection $P_\om:X\to\B$ is bounded. Here, as usual, $\H(\D)$ stands for the space of analytic functions in the unit disc $\D=\{z\in\C:|z|<1\}$. We proceed towards the statements via necessary notation. 

For a non-negative function $\om\in L^1([0,1))$, its extension to $\D$, defined by $\om(z)=\om(|z|)$ for all $z\in\D$, is called a radial weight. For $0<p<\infty$ and such an $\omega$, the Lebesgue space $L^p_\om$ consists of complex-valued measurable functions $f$ on $\D$ such that
    $$
    \|f\|_{L^p_\omega}^p=\int_\D|f(z)|^p\omega(z)\,dA(z)<\infty,
    $$
where $dA(z)=\frac{dx\,dy}{\pi}$ is the normalized Lebesgue area measure on $\D$. The corresponding weighted Bergman space is $A^p_\om=L^p_\omega\cap\H(\D)$. Throughout this paper we assume $\widehat{\om}(z)=\int_{|z|}^1\om(s)\,ds>0$ for all $z\in\D$, for otherwise $A^p_\om=\H(\D)$. For a radial weight $\om$, the orthogonal Bergman projection $P_\om$ from $L^2_\om$ to $A^2_\om$ is
		\begin{equation*}
    P_\om(f)(z)=\int_{\D}f(\z) \overline{B^\om_{z}(\z)}\,\om(\z)dA(\z),
    \end{equation*}
where $B^\om_{z}$ are the reproducing kernels of the Hilbert space $A^2_\om$.
It has been recently shown in \cite[Theorems~1-2-3]{PelRat2020} that the Bergman projection $P_\om$, induced by a radial weight $\omega$, is bounded from $L^\infty$ to the Bloch space $\B$ if and only if $\om\in\DD$, 
while the Bloch space is continuosly embedded into $P_\om(L^\infty)$ if and only if $\omega\in\M$. Therefore, $P_\om: L^\infty\to\B$ is bounded and onto if and only $\om\in\DDD=\DD\cap\M$. Recall that a radial weight $\om$ belongs to the class~$\DD$ if there exists a constant $C=C(\om)>1$ such that $\widehat{\om}(r)\le C\widehat{\om}(\frac{1+r}{2})$ for all $0\le r<1$, while $\om\in\M$ if $\om_{x}\ge C\om_{Kx}$, for all $x\ge1$, for some $C=C(\om)>1$ and $K=K(\om)>1$. Here and from now on $\om_x=\int_0^1r^x\om(r)\,dr$ for all $x\ge0$.

Let $\b(z,\z)$ denote the hyperbolic distance between the points $z$ and $\z$ in $\D$, and let $\Delta(z,r)$ stand for the hyperbolic disc of center $z\in\D$ and radius $0<r<\infty$. Further, let $\omega$ be a radial weight and $0<r<\infty$ such that $\omega\left(\Delta(z,r)\right)>0$ for all $z\in\D$. Then, for $f\in L^p_{\om,{\rm loc}}$ with $1\le p<\infty$, write 
    $$
    \MO_{\om,p,r}(f)(z)
		=\left(\frac{1}{\om(\Delta(z,r))}
		\int_{\Delta(z,r)}|f(\z)-\widehat{f}_{r,\om}(z)|^p\om(\z)\,dA(\z)\right)^{\frac{1}{p}},
    $$
where 
	$$
	\widehat{f}_{r,\om}(z)=\frac{\int_{\Delta(z,r)}f(\z)\om(\z)\,dA(\z)}{\om(\Delta(z,r))},\quad z\in\D.
	$$ 
The space $\BMO(\Delta)_{\om,p,r}$ consists of $f\in L^p_{\om}$ such that
    $$
    \|f\|_{\BMO(\Delta)_{\om,p,r}}=\sup_{z\in\D}\MO_{\om,p,r}(f)(z)<\infty.
    $$
It is known by \cite[Theorem~11]{PPR2020} that for each $\om\in\DDD$ there exists $r_0=r_0(\om)>0$ such that
	\begin{equation}\label{eq:intro1}
	\BMO(\Delta)_{\om,p,r}=\BMO(\Delta)_{\om,p,r_0}, \quad r\ge r_0.
	\end{equation}
We call this space $\BMO(\Delta)_{\om,p}$ whenever \eqref{eq:intro1} holds, and assume that the norm is always calculated with respect to a fixed $r\ge r_0$. However, in contrast to the class $\DDD$, for each prefixed $r>0$, the quantity $\om(\Delta(z,r))$ may equal to zero for some $z$ arbitrarily close to the boundary if $\om\in\DD$, by Proposition~\ref{le:counterexample} below. Therefore the space $\BMO(\Delta)_{\om,p,r}$ is not necessarily well-defined if $\om\in\DD$, and consequently, we consider the class $\DDD$ in the main results of this paper.

It is clear that the space $\BMO(\Delta)_{\om,p}$ depends on $\omega\in\DDD$, but for $\omega\in\Inv$,
straightforward calculations show that for each $r_1,r_2\in(0,\infty)$, we have $\BMO(\Delta)_{\om,p,r_1}=\BMO(\Delta)_{\nu,p,r_2}$ where $\nu(z)\equiv1$. Therefore we call this space $\BMO(\Delta)_{p}$. Recall that $\omega$ is invariant, denoted by $\om\in\Inv$, if for some (equivalently for all) $r\in(0,\infty)$ there exists a constant $C=C(r)\ge1$ such that such that $C^{-1}\om(\zeta)\le\omega(z)\le C\omega(\zeta)$ for all $\zeta\in\Delta(z,r)$. That is, an invariant weight is essentially constant in each hyperbolically bounded region. The class $\mathcal{R}$ of regular weights, which is a large subclass of smooth weights in $\DDD$, satisfies $\mathcal{R}\subset\Inv\cap\DDD$ by \cite[Section~1.3]{PR}. The space $\BMO(\Delta)_{\om,p}$ certainly depends on $p$ as is seen by considering the function $f(z)=|z|^{-\frac2p}$ which satisfies $f\in\BMO(\Delta)_q\setminus\BMO(\Delta)_p$ for $q<p$.  
 
We recall one last thing before stating the main result of this paper. Namely, an analytic function $f$ belongs to $\B$ if and only if it is Lipschitz continuous in the hyperpolic metric \cite[Theorem~5.5]{Zhu}. Therefore $\B\subset\BMO(\Delta)_{\om,p,r}$ for each $1\le p<\infty$, $0<r<\infty$ and a radial weight $\omega$ such that $\omega\left(\Delta(z,r)\right)>0$ for all $z\in\D$.

\begin{theorem}\label{th:main}
Let $1<p<\infty$ and $\om\in\M$. Then the following statements are equivalent:
	\begin{itemize}
	\item[\rm(i)] There exists $r_0=r_0(\omega)\in(0,\infty)$ such that $\BMO(\Delta)_{\om,p,r}$ does not depend on $r$, provided $r\ge r_0$. Moreover, $P_\om:\BMO(\Delta)_{\om,p}\to\B$ is bounded;
	\item[\rm(ii)] $P_\om:L^\infty\to\B$ is bounded;
	\item[\rm(iii)] $\om\in\DD$.
	\end{itemize}
\end{theorem}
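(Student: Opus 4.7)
My strategy is to dispatch (ii) $\Leftrightarrow$ (iii) and (i) $\Rightarrow$ (ii) from already-quoted results and a trivial embedding, and to concentrate the substance of the theorem in the implication (iii) $\Rightarrow$ (i). Under the standing hypothesis $\om\in\M$, the conditions $\om\in\DD$ and $\om\in\DDD$ coincide, so \cite[Theorems~1--3]{PelRat2020} recorded in the introduction immediately gives (ii) $\Leftrightarrow$ (iii). For (i) $\Rightarrow$ (ii), the well-definedness of $\BMO(\Delta)_{\om,p,r}$ asserted in (i) forces $\om(\Delta(z,r))>0$ for all $z\in\D$ whenever $r\ge r_0$, and the crude pointwise bound $|f(\z)-\widehat{f}_{r,\om}(z)|\le 2\|f\|_{L^\infty}$ shows that $L^\infty$ embeds continuously into $\BMO(\Delta)_{\om,p}$. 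The hypothesized boundedness of $P_\om$ on the latter space therefore transfers to $L^\infty$, giving (ii).

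The heart of the argument is (iii) $\Rightarrow$ (i). Combining (iii) with $\om\in\M$ yields $\om\in\DDD$, so the $r$-independence part of (i) is exactly \eqref{eq:intro1}. To prove $P_\om:\BMO(\Delta)_{\om,p}\to\B$ bounded, I would characterize membership in $\B$ through $\sup_{z\in\D}(1-|z|^2)|(P_\om f)'(z)|$ and exploit the normalizing identity $P_\om(1)=1$, which upon differentiating in $z$ under the integral sign gives $\int_{\D}\partial_z\overline{B^\om_z(\z)}\,\om(\z)\,dA(\z)=0$. Subtracting this identity from $(P_\om f)'(z)$ yields the representation
\[
(P_\om f)'(z)=\int_{\D}\bigl(f(\z)-\widehat{f}_{r,\om}(z)\bigr)\,\partial_z\overline{B^\om_z(\z)}\,\om(\z)\,dA(\z),
\]
the standard trick that plants a BMO-type oscillation quantity inside the kernel representation.

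Splitting this integral at $\Delta(z,r)$, H\"older's inequality with exponents $p$ and $p'$ controls the local piece by
\[
\MO_{\om,p,r}(f)(z)\,\om(\Delta(z,r))^{1/p}\le\|f\|_{\BMO(\Delta)_{\om,p}}\,\om(\Delta(z,r))^{1/p}
\]
multiplied by an $L^{p'}(\om)$-mean of $|\partial_z\overline{B^\om_z}|$ on $\Delta(z,r)$. For the tail, I would decompose $\D\setminus\Delta(z,r)$ into hyperbolic rings $\Delta(z,(k+1)r)\setminus\Delta(z,kr)$ for $k\ge 1$, and on each such ring bound $|f(\z)-\widehat{f}_{r,\om}(z)|$ by combining a telescoping chain of hyperbolic averages linking $\widehat{f}_{r,\om}(z)$ to $\widehat{f}_{r,\om}(w)$ for $w$ in the ring with a ring-local H\"older estimate, producing a factor of order $k\|f\|_{\BMO(\Delta)_{\om,p}}$ to be balanced against the decay of $|\partial_z\overline{B^\om_z}|$ away from the diagonal.

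The principal obstacle will be securing pointwise and $L^{p'}(\om)$-integral estimates for $\partial_z\overline{B^\om_z}$ when $\om$ is only assumed to lie in $\DDD$: no closed form for the reproducing kernel is available, orthogonality cannot be invoked when $p\ne 2$, and the linear growth $k$ of the BMO oscillation across hyperbolic rings must be defeated by sufficiently fast kernel decay after multiplication by the Bloch factor $(1-|z|^2)$. The required quantitative kernel estimates for $\om\in\DDD$ are the technical input developed in the Pel\'aez--R\"atty\"a program for doubling weights, and once in place they combine with the decomposition above to yield $(1-|z|^2)|(P_\om f)'(z)|\lesssim\|f\|_{\BMO(\Delta)_{\om,p}}$ uniformly in $z\in\D$, completing (iii) $\Rightarrow$ (i) and hence the theorem.
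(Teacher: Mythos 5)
Your treatment of (ii) $\Leftrightarrow$ (iii) and (i) $\Rightarrow$ (ii) matches the paper. The problem lies in your plan for (iii) $\Rightarrow$ (i), and it is concentrated exactly where you flag the ``principal obstacle'': the tail estimate over the hyperbolic rings $\Delta(z,(k+1)r)\setminus\Delta(z,kr)$ requires pointwise off-diagonal decay of $\partial_z\overline{B^\om_z(\z)}$ (or at least ring-localized $L^{p'}_\om$ bounds for it) strong enough to defeat the linear growth $k$ of the telescoped oscillation. For a general $\om\in\DDD$ no such pointwise estimates exist in the literature; the only available kernel information is of integral type -- global $L^p_\nu$ norms of $B^\om_z$ and of the modified kernels $(1-\overline{z}\z)^N B^\om_z(\z)$, obtained from the Maclaurin coefficients via Hardy--Littlewood -- and the paper itself identifies the absence of explicit expressions or pointwise bounds for $B^\om_z$ as the primary obstacle in this theory. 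So the step you defer to ``quantitative kernel estimates from the Pel\'aez--R\"atty\"a program'' is not a routine citation; it is an input that does not exist in the generality needed, and the classical ring-decomposition argument (which works for standard weights because one has $|B^\alpha_z(\z)|\asymp|1-\overline{z}\z|^{-(2+\alpha)}$) cannot be completed this way.

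The paper takes a genuinely different route that avoids pointwise kernel bounds altogether. It decomposes $f=f_1+f_2$ with $f_1\in\BA(\Delta)_{\om,p}$ and $f_2\in\BO(\Delta)$, shows via the transform $V_{\om,\nu}$ and the $A^p_\nu$-norm estimates for $(1-\overline{z}\z)^N B^\om_z(\z)$ (Lemma~\ref{kernelmix-general}) that the little Hankel operator $h^\om_{\overline{f}}$ is bounded on $A^s_\om$ (Theorem~\ref{th:Hankelqbiggerp}), then uses $h^\om_{\overline{f}}=h^\om_{\overline{P_\om(f)}}$ (Lemma~\ref{lemma:hankel-P}) and the characterization that $h^\om_{\overline{\vp}}$ is bounded for analytic $\vp$ if and only if $\vp\in\B$ (Theorem~\ref{proposition:Bloch-necessary}) to conclude $P_\om(f)\in\B$ with norm control. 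If you wish to salvage a direct argument on $(1-|z|^2)|(P_\om f)'(z)|$, you would need to either restrict to weights admitting pointwise kernel estimates (e.g.\ regular weights) or re-route the tail through integral estimates of the type the paper proves -- at which point you have essentially reconstructed its Hankel-operator argument.
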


The class $\M$ is a wide class of radial weights containing the standard radial weights as well as exponential-type weights~\cite[Chapter~1]{PR}. It is also worth observing that weights in $\M$ may admit a substantial oscillating behavior. In fact, a careful inspection of the proof of \cite[Proposition~14]{PelRat2020} reveals the existence of a weight $\omega\in\M$ such that $\BMO(\Delta)_{\om,p,r}$ is not well-defined for any $r>0$ and $1<p<\infty$, and therefore we cannot get rid of the first statement in the case (i) in Theorem~\ref{th:main}. However, each weight $\om$ in the class $\Dd$ has the property that $\om(\Delta(z,r))>0$ for all $z\in\D$ and for all $r$ sufficiently large depending on $\om$. The class $\Dd$ consists of radial weights $\omega$ for which there exist constants $K=K(\om)>1$ and $C=C(\om)>1$ such that $\widehat{\om}(r)\ge C\widehat{\om}\left(1-\frac{1-r}{K}\right)$ for all $0\le r<1$. Recall that $\DDD=\DD\cap\Dd=\DD\cap\M$ but $\Dd\subsetneq\M$ by~\cite[Proof of Theorem~3 and Proposition~14]{PelRat2020}.

We point out that, as far as we know, the statement in Theorem~\ref{th:main} is new even for the standard weights when $p\ne2$.
  
As for the proof of Theorem~\ref{th:main}, the equivalence between (ii) and (iii) is already known by \cite[Theorem~3]{PelRat2020}, so our contribution here consists of showing that (iii) implies (i). Our approach to this implication does not involve the big Hankel operators, though it has some similarities with the argument concerning the case $p=2$ explained in the beginning of this introduction. In particular, on the way to the proof, we establish a description of the general symbols $f$ such that the little Hankel operator 
	$$
  h^\om_{f}(g)(z)=\overline{P_\om}(fg)(z)=\int_\D f(\z)g(\z)B^\om_z(\z)\om(\z)\,dA(\z),\quad z\in\D,
  $$
is bounded on $A^p_\omega$, provided $1<p<\infty$ and $\omega\in\DDD$. In order to give the precise statement, we introduce the transformation 
	\begin{equation}\label{V}
	V_{\om,\nu}(f)(z)
	=\nu(z)\int_\D f(\zeta)\overline{B^{\om\nu}_z(\zeta)}\om(\zeta)\,dA(\zeta)
	,\quad z\in\D,\quad f\in L^1_\om,
	\end{equation} 
for each $\om\in\DDD$ and each radial function $\nu:\D\to [0,\infty)$ such that $\nu\omega$ is a weight. Moreover, 
we write $f\in L^1_{\om_{\log}}$ if
	\begin{equation}\label{fubinicondition}
  \|f\|_{L^1_{\om_{\log}}}=\int_\D|f(z)|\left(\log\frac{e}{1-|z|}\right)\om(z)\,dA(z)<\infty.
	\end{equation}

\begin{theorem}\label{Theorem:hankel-characterization}
Let $1<p<\infty$, $\om\in\DDD$ and $f\in L^1_{\om_{\log}}$. Then $h^\om_{\overline{f}}: A^p_\om\to A^p_\omega$ is bounded if and only if there exists $n_0=n_0(\om)\in\N$ such that, for each $n\ge n_0$, the weight $\nu(z)=(1-|z|)^n$ satisfies $V_{\om,\nu}(f)\in L^\infty$. Moreover, 
	$$
	\|V_{\om,\nu}(f)\|_{L^\infty}\asymp\|h^\om_{\overline{f}}\|_{A^p_\om\to\overline{A^p_\om}}
	$$
for each fixed $n\ge n_0$.
\end{theorem}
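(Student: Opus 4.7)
The plan is to derive the bilinear identity
\begin{equation}\label{eq:hankel-identity-plan}
\int_\D h^\om_{\overline f}(g)(z)\,h(z)\,\om(z)\,dA(z)
= \int_\D g(z)h(z)\,\overline{V_{\om,\nu}(f)(z)}\,\om(z)\,dA(z),
\qquad g\in A^p_\om,\ h\in A^{p'}_\om,
\end{equation}
and then to read off each implication by a suitable choice of test functions. Because $\om\in\DDD$, the results of \cite{PelRat2020} ensure that $P_\om$ is bounded on $L^p_\om$, yielding the duality $(A^p_\om)^*=A^{p'}_\om$ through the $\om$-pairing; hence the operator norm of $h^\om_{\overline f}:A^p_\om\to\overline{A^p_\om}$ coincides with the norm of the bilinear form on the left of \eqref{eq:hankel-identity-plan} on $A^p_\om\times A^{p'}_\om$, and the theorem reduces to comparing this bilinear form with $\|V_{\om,\nu}(f)\|_{L^\infty}$.

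To establish \eqref{eq:hankel-identity-plan}, apply the $\om$-reproducing formula for $h\in A^{p'}_\om$ and Fubini to rewrite its left-hand side as $\int\overline f(\zeta)g(\zeta)h(\zeta)\om(\zeta)\,dA(\zeta)$. Setting $\psi=gh\in A^1_\om$ by H\"older, the $\om\nu$-reproducing formula $\psi(\zeta)=\int\psi(u)\overline{B^{\om\nu}_\zeta(u)}(\om\nu)(u)\,dA(u)$, available on $A^1_\om$ for $n\ge n_0$ from earlier work on $\om\in\DDD$, lets one rewrite $\int\overline f\,\psi\,\om dA$ after an exchange of integration order as the right-hand side of \eqref{eq:hankel-identity-plan}, thanks to the elementary identity
\[
\int_\D\overline f(\zeta)B^{\om\nu}_u(\zeta)\om(\zeta)\,dA(\zeta)=\overline{V_{\om,\nu}(f)(u)}/\nu(u).
\]
The Fubini step is legitimized by first checking \eqref{eq:hankel-identity-plan} for polynomial $\psi$, through the expansion $B^{\om\nu}_u(\zeta)=\sum_k(\bar u\zeta)^k/(2(\om\nu)_{2k+1})$ and orthogonality of monomials, and then extending to $A^1_\om$ by polynomial density together with the log-type kernel bound $\|B^{\om\nu}_\zeta\|_{L^1_{\om\nu}}\lesssim\log\frac{e}{1-|\zeta|}$ and the hypothesis $f\in L^1_{\om_{\log}}$.

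Sufficiency is then immediate: H\"older on the right-hand side of \eqref{eq:hankel-identity-plan} gives $|\int h^\om_{\overline f}(g)h\,\om dA|\le\|V_{\om,\nu}(f)\|_{L^\infty}\|g\|_{A^p_\om}\|h\|_{A^{p'}_\om}$, and duality yields $\|h^\om_{\overline f}\|\le\|V_{\om,\nu}(f)\|_{L^\infty}$. For necessity, fix $z\in\D$ and, using the kernel estimates for $\om\in\DDD$ from \cite{PelRat2020}, factor $B^{\om\nu}_z=g_zh_z$ with $g_z\in A^p_\om$, $h_z\in A^{p'}_\om$ and $\|g_z\|_{A^p_\om}\|h_z\|_{A^{p'}_\om}\lesssim\nu(z)^{-1}$. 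Noting that the analytic function $F=V_{\om,\nu}(f)/\nu=P_{\om\nu}(f/\nu)$ belongs to $A^1_{\om\nu}$ (again by $f\in L^1_{\om_{\log}}$ together with the log-kernel estimate above), the conjugated $\om\nu$-reproducing formula turns the right-hand side of \eqref{eq:hankel-identity-plan} applied to $(g_z,h_z)$ into $\overline{F(z)}=\overline{V_{\om,\nu}(f)(z)}/\nu(z)$, whence
\[
|V_{\om,\nu}(f)(z)|=\nu(z)|F(z)|\le\nu(z)\,\|h^\om_{\overline f}\|\,\|g_z\|_{A^p_\om}\|h_z\|_{A^{p'}_\om}\lesssim\|h^\om_{\overline f}\|
\]
uniformly in $z\in\D$.

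The main obstacle is the factorization $B^{\om\nu}_z=g_zh_z$ with the sharp bound $\|g_z\|_{A^p_\om}\|h_z\|_{A^{p'}_\om}\lesssim\nu(z)^{-1}$: for standard weights one can simply take fractional powers of $(1-\bar z u)^{-(\alpha+n+2)}$ with exponents summing to $\alpha+n+2$, but in the general class $\DDD$ the construction rests on the refined Forelli--Rudin-type kernel estimates developed in \cite{PelRat2020} and the authors' related work. A secondary technical difficulty is the rigorous justification of the Fubini swaps used in establishing \eqref{eq:hankel-identity-plan}, which is precisely where the log-integrability hypothesis $f\in L^1_{\om_{\log}}$ enters the argument.
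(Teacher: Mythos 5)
Your overall strategy coincides with the paper's: the bilinear identity you propose is exactly the combination of the paper's Fubini computation $\langle\overline{h^\om_{\overline f}(g)},h\rangle_{A^2_\om}=\int_\D f\,\overline{gh}\,\om\,dA$ with the fact that $\overline{P_{\om}(f)}=\overline{P_\om V_{\om,\nu}(f)}$, the sufficiency direction is H\"older plus the $L^p_\om$-boundedness of (the maximal) Bergman projection for $\om\in\DDD$, and the necessity direction tests the form against a factorization of $B^{\om\nu}_z$; the paper's choice is precisely $g_z(\zeta)=B^{\om\nu}_z(\zeta)(1-\overline z\zeta)^n$ and $h_z(\zeta)=(1-\overline z\zeta)^{-n}$ (with the roles of $g_z$ and $h_z$ swapped according to whether $p\ge2$ or $p'>2$), which is the general-weight analogue of the fractional-power splitting you describe.

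However, there is a genuine gap exactly at the point you flag as ``the main obstacle'' and then defer to the literature. The bound $\|h_z\|_{A^{p'}_\om}^{p'}=\int_\D\om(\zeta)|1-\overline z\zeta|^{-np'}dA(\zeta)\lesssim\widehat\om(z)(1-|z|)^{1-np'}$ is indeed classical Forelli--Rudin material for $\om\in\DD$ (Lemma~A(iii), which is also what forces $n\ge n_0$). But the companion bound
$$
\int_\D\bigl|(1-\overline z\zeta)^nB^{\om\nu}_z(\zeta)\bigr|^p\om(\zeta)\,dA(\zeta)\lesssim\frac{1}{\bigl(\widehat\om(z)(1-|z|)\bigr)^{p-1}}
$$
is \emph{not} contained in \cite{PelRat2020} or the earlier kernel estimates (those give $\|B^\om_z\|_{A^p_\nu}$ without the modifying factor $(1-\overline z\zeta)^n$, and since $B^\om_z$ has no closed form one cannot absorb that factor pointwise). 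This estimate is Lemma~\ref{kernelmix-general} of the present paper, proved by an induction on $n$ that expresses the Taylor coefficients of $(1-\overline z\zeta)^nB^\om_z(\zeta)$ as iterated differences of moments $\om_{2k\pm1}$, controls them via \eqref{eq:momentogeneral} and Lemma~\ref{Lemma:weights-in-D-hat}, and then applies the Hardy--Littlewood inequality; it occupies all of Section~2 and is identified by the authors as the primary technical contribution. Without supplying this estimate your necessity argument is incomplete. A secondary, more minor issue: your identity \eqref{eq:hankel-identity-plan} cannot be extended to all $(g,h)\in A^p_\om\times A^{p'}_\om$ by polynomial density alone, since $f\in L^1_{\om_{\log}}$ does not make $\psi\mapsto\int|f||\psi|\om\,dA$ continuous on $A^1_\om$; the paper avoids this by proving the identity only for $g,h\in H^\infty$ (which suffices, as the test functions $g_z,h_z$ are bounded for each fixed $z$) and passing to the operator norm by density and the BLT theorem.
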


To simplify the notation, we write
	$$
	V_{\omega,n}=\{f:V_{\om,\nu}(f)\in L^\infty,\,\,\nu(z)=(1-|z|)^n\},\quad n\in\N.
	$$
It is worth mentioning that a radial function $f$ belongs to $V_{\omega,n}$ for some (equivalently for all) $n\in\N$ if and only if $f\in L^1_\omega$.
However, if $f\in L^1_{\om_{\log}}$ then $h^\om_{\overline{f}}=h^\om_{\overline{P_\omega(f)}}$, so $P_\omega(f)\in A^p_\omega$ is a necessary condition for $h^\om_{\overline{f}}: A^p_\om\to A^p_\omega$ to be bounded. Consequently, the hypothesis $f\in L^1_{\om_{\log}}$ of Theorem~\ref{Theorem:hankel-characterization} is justified in this sense.

Proposition~\ref{pr:Blochdescription} below implies that $f\in\H(\D)$ belongs to $V_{\om,n}$ for each $n\in\N$ if and only if $f\in\B$, and moreover, $\|f\|_{\B}\asymp\|V_{\om,\nu}(f)\|_{L^\infty}$ for each $\nu(z)=(1-|z|)^n$. Therefore,
in the case of anti-analytic symbols, we obtain the following neat result which is certainly related to the fact that, in many instances, the conditions that characterize the bounded Hankel (little or big) and integration operators, defined by 
	$$
	T_g(f)(z)=\int_0^zg'(\zeta)f(\zeta)\,d\zeta,
	$$
on weighted Bergman spaces (with $1<p<\infty$) are the same~\cite{Aleman:Constantin,DRWW2022,KR,PelSum14,PR,Zhu}. 

\begin{theorem}\label{proposition:Bloch-necessary}
Let $1<p<\infty$, $\om\in\DDD$ and $f\in\H(\D)$. Then $h^\om_{\overline{f}}:A^p_\om\to\overline{A^p_\om}$ is bounded if and only if $f\in\B$. Moreover,
	$$
	\|f\|_{\B}\asymp\|h^\om_{\overline{f}}\|_{A^p_\om\to\overline{A^p_\om}}.
	$$
\end{theorem}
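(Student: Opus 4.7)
The plan is to obtain Theorem~\ref{proposition:Bloch-necessary} as an immediate consequence of Theorem~\ref{Theorem:hankel-characterization} combined with the analytic case of Proposition~\ref{pr:Blochdescription}, which, as signalled in the text, says that for $f\in\H(\D)$ one has $f\in V_{\om,n}$ for every $n\in\N$ if and only if $f\in\B$, with $\|V_{\om,\nu}(f)\|_{L^\infty}\asymp\|f\|_{\B}$ for $\nu(z)=(1-|z|)^n$. Granting these two results, the norm equivalence in the statement is formal, and the content of the proof reduces to verifying that both implications are set up so that Theorem~\ref{Theorem:hankel-characterization} actually applies, i.e.\ that the symbol lies in $L^1_{\om_{\log}}$.

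For the implication $f\in\B\Rightarrow h^\om_{\overline{f}}$ bounded, I would first record the classical Bloch growth estimate $|f(z)|\lesssim(|f(0)|+\|f\|_{\B})\log\frac{e}{1-|z|}$, and combine it with the trivial bound $\int_\D\bigl(\log\frac{e}{1-|z|}\bigr)^{2}\om(z)\,dA(z)<\infty$, valid for every radial weight with finite moments, hence certainly for $\om\in\DDD$. This secures $f\in L^1_{\om_{\log}}$, so Theorem~\ref{Theorem:hankel-characterization} yields $\|h^\om_{\overline{f}}\|_{A^p_\om\to\overline{A^p_\om}}\asymp\|V_{\om,\nu}(f)\|_{L^\infty}$, and Proposition~\ref{pr:Blochdescription} rewrites the right-hand side as $\|f\|_{\B}$, giving the desired quantitative upper bound.

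For the converse, the delicate point is that boundedness of $h^\om_{\overline{f}}$ does not by itself exhibit $f\in L^1_{\om_{\log}}$, which is needed to trigger Theorem~\ref{Theorem:hankel-characterization}. I would bootstrap this integrability by testing the operator on the normalised reproducing kernels $B^\om_a/\|B^\om_a\|_{A^p_\om}$: using the known size estimates of $B^\om_a$ in $A^p_\om$ valid when $\om\in\DDD$, the inequality $\|h^\om_{\overline{f}}(B^\om_a/\|B^\om_a\|_{A^p_\om})\|_{\overline{A^p_\om}}\le\|h^\om_{\overline{f}}\|$ translates, after identifying the integral $h^\om_{\overline{f}}(B^\om_a)(z)$ via the reproducing property, into pointwise control of $(1-|a|)|f'(a)|$ by $\|h^\om_{\overline{f}}\|$, so that $f\in\B$. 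A cleaner alternative is to first observe that $h^\om_{\overline{f}}(1)=\overline{P_\om(f)}$ lies in $\overline{A^p_\om}$, which places $f$ (up to complex conjugation of its projection) in $A^p_\om\subset L^1_{\om_{\log}}$, at which point Theorem~\ref{Theorem:hankel-characterization} and Proposition~\ref{pr:Blochdescription} close the argument in the reverse order.

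The only genuine obstacle in this scheme is precisely the integrability bootstrap in the necessity direction; either route above (test functions or the $g\equiv 1$ trick) should handle it in a routine manner, after which the norm equivalence $\|f\|_{\B}\asymp\|h^\om_{\overline{f}}\|_{A^p_\om\to\overline{A^p_\om}}$ drops out from concatenating the two previously established asymptotics.
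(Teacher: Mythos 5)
Your argument is essentially the paper's own proof: the sufficiency is Proposition~\ref{pr:Blochdescription} plus Theorem~\ref{Theorem:hankel-characterization} after checking $f\in L^1_{\om_{\log}}$, and your ``cleaner alternative'' for the necessity --- $\overline{h^\om_{\overline{f}}(1)}=\overline{P_\om(f)}=f\in A^p_\om\subset L^1_{\om_{\log}}$, then Proposition~\ref{proposition:Bounded-necessary} and Proposition~\ref{pr:Blochdescription} --- is exactly the route taken in the paper, so you can drop the first, sketchier route via normalized reproducing kernels (there the ``reproducing property'' does not apply to the non-analytic integrand $f\overline{B^\om_a}$, so that version would need real work). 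One small correction: $\int_\D\bigl(\log\frac{e}{1-|z|}\bigr)^{2}\om\,dA<\infty$ is \emph{not} valid for every integrable radial weight (take $\widehat{\om}(r)\asymp(\log\frac{e}{1-r})^{-3/2}$); it does hold for $\om\in\DDD$ because \eqref{Eq:Dd-characterization} forces $\widehat{\om}(r)\lesssim(1-r)^{\a}$ for some $\a>0$, which is the justification you should invoke for the embedding $\B\subset L^1_{\om_{\log}}$.
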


It follows from the proof of Theorem~\ref{th:main}, that $P_\omega:X\to\B$ is bounded when $\omega\in\DDD$ and $X\subset L^1_{\om_{\log}}$ is a normed vector space such that $\|h^\om_{\overline{f}}\|_{A^p_\om\to\overline{A^p_\om}}\lesssim \|f\|_{X}$ for some $p\in (1,\infty)$. What follows from this is that $P_\om:L^1_{\om_{\log}}\cap V_{\om,n_0}\to\B$ is bounded and $L^1_{\om_{\log}}\cap V_{\omega,n_0}\cap\H(\D)=\B$, where $n_0=n_0(\om)\in\N$ is that of Theorem~\ref{Theorem:hankel-characterization}. In Proposition~\ref{th:Hankelqbiggerp} below, we establish the embedding 
	\begin{equation}\label{eq:intro2}
	\bigcup_{1<p<\infty}\BMO(\Delta)_{\om,p} \subset L^1_{\om_{\log}}\bigcap V_{\om,n_0} 
	\end{equation}
to prove that $h^\om_{\overline{f}}:A^s_\om\to\overline{A^s_\om}$ is bounded if $f\in \BMO(\Delta)_{\om,p}$, $1<s,p<\infty$. This result together with Theorems~\ref{Theorem:hankel-characterization} and \ref{proposition:Bloch-necessary} are some of the principal ingredients in the proof of Theorem~\ref{th:main}. Another crucial auxiliary result that we obtain is a precise norm-estimate for the modified Bergman reproducing kernels 
	$$
	\zeta\mapsto(1-\overline{z}\zeta)^N B^\omega_z(\zeta),\quad N\in\N,\quad \omega\in\DD,
	$$
in $A^p_\nu$ for any radial weight $\nu$. This is used to prove Theorems~\ref{Theorem:hankel-characterization} and \ref{proposition:Bloch-necessary}, and hence plays a fundamental role also in the proof of the main result.
One of the primary obstacles to obtain these kind of norm-estimates and, by and large, in the study of the Bergman projection induced by a radial weight, is the lack explicit expressions for $B^\omega_z$. 

We also observe that the statement in Theorem~\ref{proposition:Bloch-necessary} fails for $0<p\le1$. In the case $p=1$ the characterizing condition is
	\begin{equation}\label{msufk}
  \sup_{z\in\D}\left|f'(z)\right|(1-|z|)^2\log\frac{e}{1-|z|}<\infty,
  \end{equation}
while, for $0<p<1$, we have
	$$
	\|h^\om_{\overline{f}}\|_{A^p_\om\to\overline{A^p_\om}}
	\asymp\sup_{z\in\D}\frac{|f'(z)|(1-|z|^2)}{\left(\widehat{\om}(z)(1-|z|^2)\right)^{\frac{1}{p}-1}}
	$$
by \cite[Theorems~4 and 6]{DRWW2022}.

The letter $C=C(\cdot)$ will denote an absolute constant whose value depends on the parameters indicated
in the parenthesis, and may change from one occurrence to another.
We will use the notation $a\lesssim b$ if there exists a constant
$C=C(\cdot)>0$ such that $a\le Cb$, and $a\gtrsim b$ is understood
in an analogous manner. In particular, if $a\lesssim b$ and
$a\gtrsim b$, then we write $a\asymp b$ and say that $a$ and $b$ are comparable.

\section{Norm estimates for modified Bergman kernels}

The aim of this section is to establish the estimate, crucial in our arguments to prove the main results presented in the introduction, for the norm of the functions $\zeta\mapsto(1-\overline{z}\zeta)^NB^\om_z(\zeta)$ in $A^p_\nu$. We begin with known characterizations of weights in $\DD$. The next result follows by \cite[Lemma~2.1]{PelSum14} and its proof. 

\begin{letterlemma}\label{Lemma:weights-in-D-hat}
Let $\om$ be a radial weight. Then the following statements are equivalent:
\begin{itemize}
\item[\rm(i)] $\om\in\DD$;
\item[\rm(ii)] There exist $C=C(\om)>0$ and $\b=\b(\om)>0$ such that
    \begin{equation*}
    \begin{split}
    \widehat{\om}(r)\le C\left(\frac{1-r}{1-t}\right)^{\b}\widehat{\om}(t),\quad 0\le r\le t<1;
    \end{split}
    \end{equation*}
\item[\rm(iii)] There exists $\lambda=\lambda(\om)\ge0$ such that
    $$
    \int_\D\frac{\om(z)}{|1-\overline{\z}z|^{\lambda+1}}\,dA(z)\asymp\frac{\widehat{\om}(\zeta)}{(1-|\z|)^\lambda},\quad \z\in\D;
    $$
\item[\rm(iv)] There exist $C=C(\om)>0$ and $\eta=\eta(\om)>0$ such that
    \begin{equation*}
    \begin{split}
    \om_x\le C\left(\frac{y}{x}\right)^{\eta}\om_y,\quad 0<x\le
    y<\infty.
    \end{split}
    \end{equation*}
\end{itemize}
\end{letterlemma}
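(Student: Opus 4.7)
The plan is to prove the cycle (i)$\Rightarrow$(ii)$\Rightarrow$(iv), the bridge (ii)$\Rightarrow$(iii), and to close the loop via (iii)$\Rightarrow$(i); monotonicity of $\widehat\om$ will be used freely throughout. For (i)$\Rightarrow$(ii) I would iterate the dyadic inequality: setting $r_0=r$ and $r_{j+1}=(1+r_j)/2$, one has $1-r_j=2^{-j}(1-r)$ and (i) gives $\widehat\om(r)\le C^j\widehat\om(r_j)$; for $r\le t<1$, choosing $j=\lceil\log_2\tfrac{1-r}{1-t}\rceil$ so that $r_j\ge t$ and invoking monotonicity of $\widehat\om$ delivers (ii) with $\beta=\log_2 C$, while the converse (ii)$\Rightarrow$(i) is immediate upon taking $t=(1+r)/2$. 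The equivalence (ii)$\Leftrightarrow$(iv) then rests on the identity $\om_x=x\int_0^1 r^{x-1}\widehat\om(r)\,dr$ (integration by parts) together with the comparison $\om_x\asymp\widehat\om(1-1/x)$ for $x\ge1$ (obtained from (ii) by splitting the integral at $r=1-1/x$), under which (iv) is exactly (ii) evaluated at the radii $1-1/x$ and $1-1/y$.

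For (ii)$\Rightarrow$(iii), polar coordinates combined with the classical angular estimate $\int_0^{2\pi}d\theta/|1-\rho e^{i\theta}|^{\lambda+1}\asymp(1-\rho)^{-\lambda}$ (valid for $\lambda>0$) reduce the left-hand side of (iii) to the radial integral $I(\zeta):=\int_0^1\om(s)/(1-|\zeta|s)^\lambda\,ds$. One integration by parts rewrites this as $\widehat\om(0)+\lambda|\zeta|\int_0^1\widehat\om(s)/(1-|\zeta|s)^{\lambda+1}\,ds$, and splitting the latter at $s=|\zeta|$ and invoking (ii) on the interior piece, with $\lambda$ chosen so that $\lambda>\beta$, yields the upper bound $\widehat\om(\zeta)/(1-|\zeta|)^\lambda$. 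The matching lower bound follows by restricting the left-hand side of (iii) to a Stolz-type region attached to $\zeta$, in which $|1-\bar\zeta z|\asymp 1-|\zeta|$ and the $\om$-mass of the region is $\asymp(1-|\zeta|)\widehat\om(\zeta)$.

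The delicate direction is (iii)$\Rightarrow$(i). Applying (iii) at $\zeta=r\in(0,1)$ I split $I(r)$ at $(1+r)/2$: on the outer range $[(1+r)/2,1]$ one has $1-rs\asymp 1-r$, so this piece is comparable from both sides to $\widehat\om((1+r)/2)/(1-r)^\lambda$; on the inner range $[0,(1+r)/2]$ the pointwise inequality $1-rs\ge 1-\tfrac{1+r}{2}s$ (valid because $r\le(1+r)/2$) majorizes the inner piece by $I((1+r)/2)$, which by a second application of (iii) is itself $\asymp\widehat\om((1+r)/2)/(1-r)^\lambda$. Summing and comparing with the lower bound $I(r)\gtrsim\widehat\om(r)/(1-r)^\lambda$ supplied by (iii) at $r$ yields $\widehat\om(r)\lesssim\widehat\om((1+r)/2)$, which is (i). The main obstacle I anticipate is keeping the parameter $\lambda$ consistent across all three arguments — positivity for the angular identity, $\lambda>\beta$ for the upper bound in (ii)$\Rightarrow$(iii), and the specific value postulated in (iii) itself — but fixing any sufficiently large $\lambda$ accommodates every constraint simultaneously. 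Since the whole statement is already recorded with essentially this argument in \cite[Lemma~2.1]{PelSum14} and its proof, one may alternatively cite that reference verbatim.
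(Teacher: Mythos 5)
The paper gives no proof of this lemma at all: it is quoted as a known result, ``follows by [PelSum14, Lemma~2.1] and its proof,'' so your closing remark that one may simply cite that reference is exactly what the authors do. Your from-scratch sketch is still worth assessing, and most of it is sound: the dyadic iteration for (i)$\Leftrightarrow$(ii), the identity $\om_x=x\int_0^1 r^{x-1}\widehat\om(r)\,dr$ and the comparison $\om_x\asymp\widehat\om(1-1/x)$ for (ii)$\Rightarrow$(iv), the angular estimate plus integration by parts for (ii)$\Rightarrow$(iii), and the splitting of $I(r)$ at $(1+r)/2$ for (iii)$\Rightarrow$(i) all go through. (A harmless point: your reduction to the radial integral needs $\lambda>0$, but (iii) can in fact never hold with $\lambda=0$, since then the left-hand side stays bounded away from $0$ while $\widehat\om(\z)\to0$, so this costs nothing.)

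The one genuine gap is the return from (iv). Your plan proves (i)$\Leftrightarrow$(ii)$\Leftrightarrow$(iii) and (ii)$\Rightarrow$(iv), but nothing brings (iv) back into the cycle: you assert ``(ii)$\Leftrightarrow$(iv)'' on the grounds that (iv) is (ii) read at the radii $1-1/x$ and $1-1/y$ through $\om_x\asymp\widehat\om(1-1/x)$, yet you obtain that comparison \emph{from} (ii), so the argument is circular in the direction (iv)$\Rightarrow$(ii). The lower bound $\widehat\om(1-1/x)\lesssim\om_x$ is free for any radial weight, but the upper bound $\om_x\lesssim\widehat\om(1-1/x)$ is itself a form of the doubling you are trying to establish. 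Under (iv) alone it can be recovered by an absorption argument: for $k>1$ and $y\ge2$ one has
\begin{equation*}
\om_{ky}\le e^{-(k-1)}\om_y+\widehat\om(1-1/y),
\end{equation*}
and combining this with $\om_y\le Ck^{\eta}\om_{ky}$ from (iv) and choosing $k$ so large that $Ck^{\eta}e^{-(k-1)}\le\tfrac12$ absorbs the first term and yields $\om_y\lesssim\widehat\om(1-1/y)$, from which (ii) follows at the radii $1-1/y$ and hence everywhere by monotonicity. Some such step (or an explicit appeal to the cited lemma for this implication) is needed to complete the equivalence.
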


The following result of technical nature can be established by arguments similar to those used to prove \cite[(22)]{PR2016/1}. We leave the details of the proof for an interested reader.

\begin{lemma}\label{le:HLextended}
Let $\omega\in\DD$, $0<p<\infty$ and $\alpha\in\mathbb{R}$. Then
	$$
	\sum_{n=0}^\infty \frac{(n+1)^{\alpha-2}}{\omega^p_{2n+1}}s^n
	\asymp\int_0^s \frac{dt}{\widehat{\omega}(t)^p(1-t)^\alpha}+1,\quad 0<s<1.	
	$$
\end{lemma}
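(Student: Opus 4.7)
The plan is to prove the two-sided estimate by passing to a dyadic model on both sides and comparing block by block. The driving idea is that for $\omega\in\DD$ the tail integral $\widehat{\omega}$ is roughly constant on dyadic annuli, and both the sum and the integral can be reduced to the same dyadic series.

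First I would replace the moments $\omega_{2n+1}$ appearing in the denominator by values of $\widehat{\omega}$. For $\omega\in\DD$ a standard consequence of Lemma~A (in particular of parts (ii) and (iv)) combined with the integration by parts identity $\omega_x=x\int_0^1 r^{x-1}\widehat{\omega}(r)\,dr$ is the moment comparison
$$
\omega_x\asymp\widehat{\omega}\bigl(1-1/x\bigr),\qquad x\ge1.
$$
In particular $\omega_{2n+1}\asymp\widehat{\omega}(r_n)$ where $r_n=1-\frac{1}{n+1}$, so the sum to be estimated is comparable with
$$
S(s)=\sum_{n=0}^\infty\frac{(n+1)^{\alpha-2}}{\widehat{\omega}(r_n)^p}s^n.
$$

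Next I would set $r_k=1-2^{-k}$ and partition $\N$ into dyadic blocks $I_k=\{n:2^k\le n+1<2^{k+1}\}$. On each $I_k$, $(n+1)^{\alpha-2}\asymp 2^{k(\alpha-2)}$, and by the $\DD$-property $\widehat{\omega}(r_n)\asymp \widehat{\omega}(r_k)$ since $r_{k+1}=(1+r_k)/2$. Summing in $n$ over $I_k$ (there are $\asymp 2^k$ terms) and controlling the geometric factor by $s^n\asymp s^{2^k}$ on $I_k$, one gets
$$
S(s)\asymp\sum_{k=0}^\infty\frac{2^{k(\alpha-1)}}{\widehat{\omega}(r_k)^p}s^{2^k}.
$$
On the integral side, using again $\widehat{\omega}(t)\asymp\widehat{\omega}(r_k)$ for $t\in[r_k,r_{k+1}]$ and $1-t\asymp 2^{-k}$, direct integration gives
$$
\int_{r_k\wedge s}^{r_{k+1}\wedge s}\frac{dt}{\widehat{\omega}(t)^p(1-t)^\alpha}\asymp\frac{2^{k(\alpha-1)}}{\widehat{\omega}(r_k)^p}\chi_{\{r_k<s\}},
$$
plus the bounded contribution coming from $[0,1/2]$ which accounts for the ``$+1$'' on the right-hand side (and also matches the bounded value of $S(s)$ as $s\to 0^+$).

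Finally I would compare the two dyadic series. The point is that $s^{2^k}$ is essentially $1$ for $2^k\lesssim(1-s)^{-1}$ and decays doubly exponentially for $2^k\gg (1-s)^{-1}$, while $\chi_{\{r_k<s\}}=1$ precisely when $2^k\lesssim(1-s)^{-1}$. Hence, up to a harmless tail whose mass is geometric and absorbed into neighboring blocks using Lemma~A(ii) (which controls the ratio $\widehat{\omega}(r_k)/\widehat{\omega}(r_{k+1})$ from above, while the reverse bound is immediate from monotonicity), both sides are comparable to the same truncated dyadic series $\sum_{2^k\lesssim(1-s)^{-1}}2^{k(\alpha-1)}\widehat{\omega}(r_k)^{-p}$. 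This gives $S(s)\asymp I(s)+1$ as claimed. The one delicate point I anticipate is the treatment of the cutoff near $2^k\asymp(1-s)^{-1}$: the factor $s^n$ is not literally $\chi_{\{n(1-s)\le1\}}$, and for $\alpha-1\le0$ the sum is not monotone in $k$, so one needs a small geometric tail argument (of the same flavor as in \cite[(22)]{PR2016/1}) to make the cutoff rigorous in both directions.
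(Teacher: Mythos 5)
Your argument is sound and is essentially the approach the paper intends: the paper gives no proof of this lemma, deferring to the analogous computation in \cite[(22)]{PR2016/1}, and your reduction via $\omega_x\asymp\widehat{\omega}(1-1/x)$ (valid for $\om\in\DD$ by Lemma~\ref{Lemma:weights-in-D-hat}) followed by a dyadic block comparison is exactly that standard argument. The only step that is false as literally written is ``$s^n\asymp s^{2^k}$ on $I_k$'' (the ratio $s^{2^k}/s^{2^{k+1}}$ is unbounded beyond the critical block $2^k\asymp(1-s)^{-1}$), but you correctly flag this, and the one-sided bounds together with the index shift $a_{k-1}\asymp a_k$ from Lemma~\ref{Lemma:weights-in-D-hat}(ii) and the doubly exponential tail decay repair it in both directions, so no genuine gap remains.
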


In the forthcoming arguments we will face weights which are of the same nature as $\om_{[\beta]}$ but slightly different. Since we have to keep track with the details here carefully, we introduce suitable notation for these creatures. For this purpose, we denote
	$$ 
	\left( \omega_{(n_1,y_1),(n_2,y_2),\dots,(n_k,y_k)}\right)_x
	=\int_{0}^1 r^x\left(\prod_{j=1}^k(1-r^{y_j})^{n_j}\right)\om(r)\,dr,\quad 0\le x<\infty,
	$$
for $k,n_j,y_j\in\N$ and $j\in\{1,\dots,k\}$. It follows from the inequality $1-r^a\le a(1-r)$, valid for all $1\le a<\infty$ and $0\le r\le1$, and \cite[(1.3)]{PelRat2020} that there exists a constant $C=C\left(\omega, \sum_{j=1}^k n_j\right)>0$ such that
	\begin{equation}
	\begin{split}\label{eq:momentogeneral}
	\left(\omega_{(n_1,y_1),(n_2,y_2),\dots,(n_k,y_k)}\right)_x
	&\le\left(\prod_{j=1}^k y_j^{n_j}\right)\left( \omega_{\left[\sum_{j=1}^k n_j \right]}\right)_x\\
	&\le C\left(\prod_{j=1}^k y_j^{n_j}\right)\frac{\omega_x}{x^{\sum_{j=1}^k n_j}}, \quad 0\le x<\infty.
	\end{split}
	\end{equation}

The kernel estimate that we are after reads as follows.

\begin{lemma}\label{kernelmix-general}
Let $2\le p<\infty$, $N\in\N$ and $\om\in\DD$, and let $\nu$ be a radial weight. Then there exists a constant $C=C(\om,\nu,p,N)>0$ such that
    \begin{equation}\label{eq:kernelmix-general}
    \int_\D|(1-\overline{z}\z)^NB^\om_z(\z)|^p\nu(\z)\,dA(\z)
		\le C\left(\int_0^{|z|}\frac{\widehat{\nu}(t)}{\widehat{\om}(t)^p(1-t)^{p(1-N)}}\,dt+1\right),\quad z\in\D.
    \end{equation}
Moreover, if $\nu\in\DDD$, then
		\begin{equation}\label{eq:kernelmix-generalX}
    \int_\D|(1-\overline{z}\z)^NB^\om_z(\z)|^p\nu(\z)\,dA(\z)
		\asymp\left(\int_0^{|z|}\frac{\widehat{\nu}(t)}{\widehat{\om}(t)^p(1-t)^{p(1-N)}}\,dt+1\right),\quad z\in\D.
    \end{equation}
\end{lemma}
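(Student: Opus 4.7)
My plan is to expand $(1-\overline{z}\zeta)^NB^\om_z(\zeta)$ as a power series in $w=\overline{z}\zeta$, control the resulting Taylor coefficients using the $\DD$-hypothesis on $\om$ together with \eqref{eq:momentogeneral}, and then apply Lemma~\ref{le:HLextended} after passing to polar coordinates in $\zeta$. Writing
\[
F_{\om,N}(w):=(1-w)^N\sum_{n\ge0}\frac{w^n}{2\om_{2n+1}}=\sum_{m\ge0}a_mw^m,
\]
so that $(1-\overline{z}\zeta)^NB^\om_z(\zeta)=F_{\om,N}(\overline{z}\zeta)$, the binomial formula gives, for $m\ge N$,
\[
a_m=\sum_{k=0}^N(-1)^k\binom{N}{k}\frac{1}{2\om_{2(m-k)+1}}=\tfrac{1}{2}\Delta^N g(m-N),
\]
where $g(j):=1/\om_{2j+1}$ and $\Delta^N$ denotes the $N$-th forward difference. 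I would first establish the coefficient bound $|a_m|\lesssim\frac{1}{(m+1)^N\om_{2m+1}}$, with implicit constant depending on $\om$ and $N$. The case $N=1$ is direct from $\om_{2j+1}-\om_{2j+3}=(\om_{(1,2)})_{2j+1}\lesssim\om_{2j+1}/(j+1)$, which is \eqref{eq:momentogeneral}, together with $\om_{2j+1}\asymp\om_{2j+3}$ from Lemma~\ref{Lemma:weights-in-D-hat}(iv). For general $N$ I would view $\Delta^N g$ as the $N$-th derivative of the smooth extension $\tilde g(x)=1/\om_{2x+1}$; Fa\`a di Bruno applied to $\tilde g=1/L_0$, with $L_k(x):=2^k\int_0^1r^{2x+1}(\log r)^k\om(r)\,dr$ and the moment bound $|L_k(x)|\lesssim\om_{2x+1}/(x+1)^k$ coming from $|\log r|\asymp 1-r$ near $r=1$ and \eqref{eq:momentogeneral}, then gives $|\tilde g^{(N)}(x)|\lesssim\tilde g(x)/(x+1)^N$ and hence the desired control on $|a_m|$.

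Polar coordinates yield $I_z:=\int_\D|F_{\om,N}(\overline{z}\zeta)|^p\nu(\zeta)\,dA(\zeta)=2\int_0^1M_p(|z|\rho,F_{\om,N})^p\rho\nu(\rho)\,d\rho$, where $M_p(r,F)^p=\frac{1}{2\pi}\int_0^{2\pi}|F(re^{i\theta})|^p\,d\theta$. The crucial intermediate estimate is
\[
M_p(r,F_{\om,N})^p\lesssim\int_0^r\frac{dt}{\widehat{\om}(t)^p(1-t)^{p(1-N)}}+1,\qquad 0\le r<1.
\]
For $p=2$ this follows from Parseval combined with the coefficient bound and Lemma~\ref{le:HLextended} applied at $(p,\alpha)=(2,2-2N)$; the change of upper limit from $r^2$ to $r$ is harmless because the integrand is non-negative. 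For $p>2$ I would use the H\"older sandwich $M_p(r,F)^p\le M_\infty(r,F)^{p-2}M_2(r,F)^2$ together with $M_\infty(r,F_{\om,N})\le\sum_m|a_m|r^m$ and the Parseval formula for $M_2^2$, invoking Lemma~\ref{le:HLextended} at the parameters $(1,2-N)$ and $(2,2-2N)$ respectively. The Hardy-type asymptotic $\int_0^r dt/[\widehat{\om}(t)^p(1-t)^\alpha]\asymp 1/[\widehat{\om}(r)^p(1-r)^{\alpha-1}]$ for $\om\in\DD$, valid by Lemma~\ref{Lemma:weights-in-D-hat}(ii) whenever the integral diverges at $r=1$, then converts these two scalar estimates into $M_p(r,F_{\om,N})^p\lesssim(1-r)^{p(N-1)+1}/\widehat{\om}(r)^p+1$, which is the displayed integrated form.

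Inserting this into the polar representation and swapping the order of integration gives
\[
I_z\lesssim\int_0^{|z|}\frac{1}{\widehat{\om}(t)^p(1-t)^{p(1-N)}}\int_{t/|z|}^1\rho\nu(\rho)\,d\rho\,dt+1\le\int_0^{|z|}\frac{\widehat{\nu}(t/|z|)}{\widehat{\om}(t)^p(1-t)^{p(1-N)}}\,dt+1,
\]
and \eqref{eq:kernelmix-general} now follows from the monotonicity bound $\widehat{\nu}(t/|z|)\le\widehat{\nu}(t)$ (valid since $t/|z|\ge t$ and $\widehat{\nu}$ is decreasing). For the matching lower bound in \eqref{eq:kernelmix-generalX} under the extra hypothesis $\nu\in\DDD$, the $p=2$ case is obtained by reversing every inequality above, using the two-sided $\asymp$ of Lemma~\ref{le:HLextended}, the doubling $\widehat{\nu}(r)\asymp\widehat{\nu}((1+r)/2)$ and the moment comparability $\nu_{2m+1}\asymp\widehat{\nu}(1-1/(m+1))$ provided by $\nu\in\DDD$, and the equality in Parseval's identity. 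For general $p\in(2,\infty)$ I would obtain the lower bound by duality, testing the $A^p_\nu$-norm of $F_{\om,N}(\overline{z}\cdot)$ against a suitably normalized element of $A^{p'}_\nu$ such as another modified kernel $(1-\overline{z}\zeta)^{N'}B^\om_z(\zeta)$, chosen so that the pairing reduces explicitly to a quantity whose size is already pinned down by the $p=2$ analysis.

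The principal technical difficulty will be ensuring that the H\"older sandwich is sharp enough when $p>2$: the naive bound $M_p(r,F_{\om,N})\le\sum_m|a_m|r^m$ loses a factor of $(1-r)^{1/p}$, producing an $M_p^p$-estimate too large by one factor of $(1-r)$. The sandwich $M_p^p\le M_\infty^{p-2}M_2^2$ is precisely strong enough because it exploits the extra cancellation visible in $M_2$, and verifying the two-sided Hardy-type asymptotic for $\int dt/[\widehat{\om}(t)^p(1-t)^\alpha]$ requires the precise $\DD$-index from Lemma~\ref{Lemma:weights-in-D-hat}(ii); when that index is too small for the integral to diverge at $r=1$, the ``$+1$'' term on the right-hand side absorbs the contribution automatically.
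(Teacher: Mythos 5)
Your expansion into Taylor coefficients, the bound $|a_m|\lesssim (m+1)^{-N}\om_{2m+1}^{-1}$ via $N$-th finite differences and Fa\`a di Bruno, and the final polar-coordinate/Fubini step are all sound; indeed the finite-difference route to the coefficient bound is a cleaner alternative to the paper's induction on $N$, which tracks products of modified moments explicitly to control exactly the same quantity. The gap is in the middle step, and it is precisely the one you flag at the end and then wrongly dismiss: the H\"older sandwich $M_p(r,F)^p\le M_\infty(r,F)^{p-2}M_2(r,F)^2$ is \emph{not} strong enough for $p>2$. Take $N=2$, $p=4$ and the standard weight with $\widehat{\om}(t)=(1-t)^{6/5}$, so that $F_{\om,2}(w)\asymp(1-w)^{-1/5}$ and $|a_m|\asymp (m+1)^{-4/5}$. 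Then $M_\infty(r,F_{\om,2})\asymp(1-r)^{-1/5}$ and $M_2(r,F_{\om,2})^2\asymp1$, so the sandwich only yields $M_4^4\lesssim(1-r)^{-2/5}$, whereas the true value and the target $\int_0^r\widehat{\om}(t)^{-4}(1-t)^{4}\,dt+1\asymp1$ are both bounded. Integrating the spurious factor $(1-r)^{-2/5}$ against, say, $\nu(r)=(1-r)^{-9/10}$ produces a quantity of size $(1-|z|)^{-3/10}$, while \eqref{eq:kernelmix-general} asserts boundedness; so your argument only proves a strictly weaker estimate. The loss occurs because $(1-w)^{-1/5}$ attains its maximum modulus only on an arc of length $\asymp 1-r$, a cancellation invisible to $M_\infty^{p-2}M_2^2$. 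The correct tool, and the one the paper uses, is the Hardy--Littlewood coefficient inequality $M_p(r,F)^p\lesssim\sum_n(n+1)^{p-2}|a_n|^pr^{np}$ for $p\ge2$ \cite[Theorem~6.3]{D}; fed with your coefficient bound and Lemma~\ref{le:HLextended} at $\alpha=(1-N)p$ it gives exactly the intermediate estimate you want, and this is also the reason the hypothesis $2\le p<\infty$ appears in the statement.

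For the reverse estimate \eqref{eq:kernelmix-generalX} your plan (reverse every step for $p=2$, then dualize against another modified kernel for $p>2$) is much heavier than necessary, and the duality step is not clearly executable since such a pairing does not obviously bound the $A^p_\nu$-norm from below. The paper's argument is one line: $|1-\overline{z}\z|\ge1-|\z|$ gives
\begin{equation*}
\int_\D|(1-\overline{z}\z)^NB^\om_z(\z)|^p\nu(\z)\,dA(\z)\ge\int_\D|B^\om_z(\z)|^p\,\nu_{[Np]}(\z)\,dA(\z),
\end{equation*}
and since $\nu_{[Np]}\in\DDD$ by Lemma~\ref{room}(i), the right-hand side is comparable to the claimed lower bound by \cite[Theorem~1(ii)]{PR2016/1} together with $\widehat{\nu_{[Np]}}(t)\asymp\widehat{\nu}(t)(1-t)^{Np}$.
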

		
\begin{proof}
First we will deal with the case $N=1$. A direct calculation shows that
    \begin{equation*}
    \begin{split}
    2(1-\overline{z}\z)B^\om_z(\z)
    &=(1-\overline{z}\z)\sum_{n=0}^\infty\frac{(\overline{z}\z)^n}{\om_{2n+1}}
    =\sum_{n=0}^\infty\frac{(\overline{z}\z)^n}{\om_{2n+1}}
		-\sum_{n=0}^\infty\frac{(\overline{z}\z)^{n+1}}{\om_{2n+1}}\\
    &=\frac1{\om_1}+\sum_{n=1}^\infty\left(\frac{\om_{2n-1}-\om_{2n+1}}{\om_{2n+1}\om_{2n-1}}\right)(\overline{z}\z)^n,
    \end{split}
    \end{equation*}
where
    \begin{equation}\label{eq:difmoments}
    \om_{2n-1}-\om_{2n+1}=\int_0^1r^{2n-1}\om(r)(1-r^2)\,dr=(\om_{(1,2)})_{2n-1},\quad n\in\N.
    \end{equation}
Thus
    \begin{equation}\label{eq:n=1}
    \begin{split}
    2(1-\overline{z}\z)B^\om_z(\z)
		&=\frac1{\om_1}+\sum_{n=1}^\infty\left(\frac{(\om_{(1,2)})_{2n-1}}{\om_{2n+1}\om_{2n-1}}\right)(\overline{z}\z)^n\\
		&=A_1(\omega)+\sum_{n=1}^\infty\left(\frac{(\om_{(1,2)})_{2n-1}}{\om_{2n+1}\om_{2n-1}}\right)(\overline{z}\z)^n ,\quad z,\z\in\D.
    \end{split}
    \end{equation}
By the Hardy-Littlewood inequality \cite[Theorem~6.3]{D} and \eqref{eq:momentogeneral}, we deduce
    \begin{equation*}\begin{split}
    \int_0^{2\pi}|(1-\overline{z}re^{i\t})B^\om_z(re^{i\t})|^p\,d\t
    & \lesssim A^p_1(\omega)+\sum_{n=1}^\infty\left(\frac{(\om_{(1,2)})_{2n-1}}{\om_{2n+1}\om_{2n-1}}\right)^p
		n^{p-2}|\z|^{np}|z|^{np}\\
		&\lesssim A^p_1(\omega)+\sum_{n=1}^\infty\left(\frac{1}{\om_{2n+1}}\right)^pn^{-2}r^{np}|z|^{np},
		\quad z\in \D,\quad 0\le r<1,
    \end{split}
    \end{equation*}
where the sum on the right-hand side corresponds to the case $\alpha=0$ of Lemma~\ref{le:HLextended}. Therefore we obtain
    $$
    \int_0^{2\pi}|(1-\overline{z}re^{i\t})B^\om_z(re^{i\t})|^p\,d\t
    \lesssim A^p_1(\omega)+\int_0^{|z|r}\frac{dt}{\widehat{\om}(t)^p},\quad z\in\D,\quad0\le r<1,
    $$
and it follows that
    \begin{equation}\label{pilipalipuli}
    \begin{split}
    \int_\D|(1-\overline{z}\z)B^\om_z(\z)|^p\nu(\z)\,dA(\z)
    &\lesssim\int_0^1\left(\int_0^{|z|r}\frac{dt}{\widehat{\om}(t)^p}\right)\nu(r)\,dr+1\\
		&=\int_0^{|z|}\frac{1}{\widehat{\om}(t)^p}\left(\int_{t/|z|}^1\nu(r)\,dr\right)\,dt+1\\
    &\le\int_0^{|z|}\frac{\widehat{\nu}(t)}{\widehat{\om}(t)^p}\,dt+1,\quad z\in\D.
    \end{split}
    \end{equation}
Therefore the case $N=1$ is proved.

Next we could now proceed by induction, but before doing that we discuss the case $N=2$ for the sake of clarity. By using \eqref{eq:n=1}, we obtain
		\begin{equation*}
    \begin{split}
		2(1-\overline{z}\z)^2B^\om_z(\z)
		&=(1-\overline{z}\z)\left(A_1(\omega)+\sum_{n=1}^\infty\frac{(\om_{(1,2)})_{2n-1} (\overline{z}\z)^n}{\om_{2n+1}\om_{2n-1}}\right)\\
		&=A_2(\omega,\overline{z}\z)+\sum_{n=2}^\infty
		\left(\frac{(\om_{(1,2)})_{2n-1}}{\om_{2n+1}\om_{2n-1}}-\frac{(\om_{(1,2)})_{2n-3}}{\om_{2n-1}\om_{2n-3}}\right)(\overline{z}\z)^n,\quad z,\z\in\D,
    \end{split}
    \end{equation*}
where  
	$$
	A_2(\omega,\overline{z}\z)=\frac{1-\overline{z}\z}{\omega_1}+\frac{(\om_{(1,2)})_{1}}{\omega_3\omega_1}\overline{z}\z,\quad z,\z\in\D.
	$$
Obviously, $|A_2(\omega,\overline{z}\z)|\le A_2(\omega)<\infty$ for all $z,\z\in\D$, and therefore it is a harmless term. Next, a reasoning similar to that in \eqref{eq:difmoments} gives
	\begin{equation}
	\begin{split}\label{eq:difmoments2}
	\left(\frac{(\om_{(1,2)})_{2n-1}}{\om_{2n+1}\om_{2n-1}}-\frac{(\om_{(1,2)})_{2n-3}}{\om_{2n-1}\om_{2n-3}}\right)
	&=\frac{1}{\om_{2n-1}}\left(\frac{(\om_{(1,2)})_{2n-1}}{\om_{2n+1}}-\frac{(\om_{(1,2)})_{2n-3}}{\om_{2n-3}}\right)\\
	&=\frac{1}{\om_{2n-1}}\bigg(\frac{(\om_{(1,2)})_{2n-1}-(\om_{(1,2)})_{2n-3}}{\om_{2n+1}}\\
	&\quad+(\om_{(1,2)})_{2n-3} \left(\frac{1}{\om_{2n+1}}-\frac{1}{\om_{2n-3}}\right)\bigg)\\
	&=\frac{1}{\om_{2n-1}}\left(-\frac{(\om_{(2,2)})_{2n-3}}{\om_{2n+1}}+(\om_{(1,2)})_{2n-3}
	\frac{\om_{2n-3}-\om_{2n+1}}{\om_{2n+1}\om_{2n-3}}\right)\\
	&=\frac{1}{\om_{2n-1}}\left(-\frac{(\om_{(2,2)})_{2n-3}}{\om_{2n+1}}
	+\frac{(\om_{(1,2)})_{2n-3}(\om_{(1,4)})_{2n-3}}{\om_{2n+1}\om_{2n-3}}\right)
	\end{split}
	\end{equation}    
for all $n\ge2$. Therefore
	\begin{equation}\label{eq:n=2}
  \begin{split}
	2(1-\overline{z}\z)^2B^\om_z(\z)
	=A_2(\omega,\overline{z}\z)-I(\omega,\overline{z}\z)+II(\omega,\overline{z}\z),\quad z,\z\in\D,
    \end{split}
    \end{equation}
where 
	$$
	I(\omega,\overline{z}\z)
	=\sum_{n=2}^\infty \frac{(\om_{(2,2)})_{2n-3}}{\om_{2n+1}\om_{2n-1}}(\overline{z}\z)^n,\quad z,\z\in\D,
	$$
and 
	$$
	II(\omega,\overline{z}\z)
	=\sum_{n=2}^\infty \frac{(\om_{(1,2)})_{2n-3}(\om_{(1,4)})_{2n-3}}{\om_{2n+1}\om_{2n-1}\om_{2n-3}}(\overline{z}\z)^n,
   \quad z,\z\in\D.
	$$
Next, by \eqref{eq:momentogeneral} and Lemma~\ref{Lemma:weights-in-D-hat}, we deduce
	\begin{equation*}
  \begin{split} 
	\frac{(\om_{(2,2)})_{2n-3}}{\om_{2n+1}\om_{2n-1}}
	\le C(\om,2)\frac{\om_{2n-3}}{(2n-3)^2\om_{2n+1}\om_{2n-1}}
	\lesssim\frac{1}{n^2\om_{2n+1}},\quad n\ge 2,
  \end{split}
  \end{equation*}
and 
  \begin{equation*}
	\begin{split} 
	\frac{(\om_{(1,2)})_{2n-3}(\om_{(1,4)})_{2n-3}}{\om_{2n+1}\om_{2n-1}\om_{2n-3}}
	\le C(\om,2)\frac{\omega_{2n-3}}{(2n-3)^2\om_{2n+1}\om_{2n-1}}
	\lesssim\frac{1}{n^2\om_{2n+1}},\quad n\ge 2.
  \end{split}
  \end{equation*}
Observe that even if $I$ and $II$ obey the same upper estimate, there is no significant cancellation in the difference $I-II$, and therefore we may consider them separately.

The Hardy-Littlewood inequality \cite[Theorem~6.3]{D} now yields
    \begin{equation*}
		\begin{split}
    \int_0^{2\pi}|(1-\overline{z}re^{i\t})^2B^\om_z(re^{i\t})|^p\,d\t
		\lesssim A^p_2(\omega)+\sum_{n=1}^\infty\left(\frac{1}{n\om_{2n+1}}\right)^pn^{-2}r^{np}|z|^{np},\quad z\in \D,\quad 0\le r<1,
    \end{split}
    \end{equation*}
where the sum on the right-hand side corresponds to the case $\alpha=-p$ of
Lemma~\ref{le:HLextended}. Therefore
    $$
    \int_0^{2\pi}|(1-\overline{z}re^{i\t})^2B^\om_z(re^{i\t})|^p\,d\t
    \lesssim A^p_2(\omega)+\int_0^{|z|r}\frac{dt}{\widehat{\om}(t)^p(1-t)^{-p}},\quad z\in \D,\quad 0\le r<1.
    $$
By arguing as in \eqref{pilipalipuli}, we obtain \eqref{eq:kernelmix-general} for $N=2$.    

Let us now proceed by induction. Let $N\in\N\setminus\{1\}$, and assume that there exist $M=M(N)\in\N$ and $L=L(N)\in\N$ such that
	\begin{equation}\label{eq:n=N}
  \begin{split}
	& 2(1-\overline{z}\z)^NB^\om_z(\z)
	=A_N(\omega,\overline{z}\z)+\sum_{j=1}^{M}\delta_jB^\omega_{N,\ell_j,z}(\zeta) \quad z,\z\in\D,
	\end{split}
  \end{equation} 
where $\delta_j=\pm1$, $\sup_{z,\z\in\D}|A_N(\omega,\overline{z}\z)|\le A_N(\omega)<\infty$, and 
    \begin{equation*}
    \begin{split}
    B^\omega_{N,\ell_j,z}(\zeta)
		=\sum_{k=N}^\infty
		\frac{\prod_{s\in\{m_1,\dots,m_{\ell_j}\}}\left(\omega_{(n^s_1,y^s_1),(n^s_2,y^s_2),\dots,(n^s_k,y^s_k)}\right)_{2k+1-2s}}
		{\prod_{m=0}^{\ell_j}\om_{2k+1-2m}}(\overline{z}\z)^k,\quad z,\z\in\D,
    \end{split}
    \end{equation*}   
for all $j\in\{1,\dots,M\}$, and
  \begin{equation}
	\begin{split}\label{LN}
	\max_{j=1,\dots,M}\ell_j\le N+1, \quad 
	\max_{s\in\{m_1,\dots,m_{\ell_j}\},\,j=1\dots,M}y^{s}_j\le L,\quad\text{and}\quad
  \sum_{j=1}^N\sum_{s\in\{m_1,\dots,m_{\ell_j}\}}n^{s}_j=N.
  \end{split}
	\end{equation}
 
We observe first that
		\begin{equation}\label{eq:N+1s1}
    \begin{split}
		(1-\overline{z}\z)B^\omega_{N,\ell,z}(\zeta)
		=A^{\ell}_{N+1}(\omega,\overline{z}\z)+\sum_{k=N+1}^\infty \widehat{B^\omega_{N+1,\ell}}(k)(\overline{z}\z)^k,\quad z,\z\in\D,
    \end{split}
    \end{equation}
where $\ell=\ell_j$ with $j\in\{1,\dots,M\}$, $\sup_{z,\z\in\D}|A^{\ell}_{N+1}(\omega,\overline{z}\z)|\le A^{\ell}_N(\omega)<\infty$ and    
		\begin{equation*}\label{eq:N+1s11}
		\begin{split}
    \widehat{B^\omega_{N+1,\ell}}(k)
		&=\frac{\prod_{s\in\{m_1,\dots,m_{\ell}\}}\left(\omega_{(n^s_1,y^s_1),(n^s_2,y^s_2),\dots,(n^s_k,y^s_k)}\right)_{2k+1-2s}}
		{\prod_{m=0}^{\ell}\om_{2k+1-2m}}\\
		&\quad-\frac{\prod_{s\in\{m_1,\dots,m_{\ell}\}}\left(\omega_{(n^s_1,y^s_1),(n^s_2,y^s_2),\dots,(n^s_k,y^s_k)}\right)_{2k-1-2s}}
		{\prod_{m=0}^{\ell}\om_{2k-1-2m}} 
    \end{split}
		\end{equation*}
Next, by generalizing the proof of \eqref{eq:difmoments2}, we obtain, for each $k\ge N+1$, the identity
		\begin{equation}\label{eq:N+1s2}
    \begin{split}
		\widehat{B^\omega_{N+1,\ell}}(k)\prod_{m=1}^{\ell}\om_{2k+1-2m}
		&=\frac{\prod_{s\in\{m_1,\dots,m_{\ell}\}}\left(\omega_{(n^s_1,y^s_1),(n^s_2,y^s_2),\dots,(n^s_k,y^s_k)}\right)_{2k+1-2s}}{ \om_{2k+1}}\\
		&\quad-\frac{\prod_{s\in\{m_1,\dots, m_{\ell}\}}\left(\omega_{(n^s_1,y^s_1),(n^s_2,y^s_2),\dots,(n^s_k,y^s_k)}\right)_{2k-1-2s}}
		{\om_{2k-1-2\ell}}\\
		&=\frac{1}{\om_{2k+1}}\bigg(\prod_{s\in\{m_1,\dots, m_{\ell}\}}\left(\omega_{(n^s_1,y^s_1),(n^s_2,y^s_2),\dots,(n^s_k,y^s_k)}\right)_{2k+1-2s}\\
		&\quad-\prod_{s\in\{m_1,\dots, m_{\ell}\}}\left(\omega_{(n^s_1,y^s_1),(n^s_2,y^s_2),\dots,(n^s_k,y^s_k)}\right)_{2k-1-2s}\bigg)\\
		&\quad+\frac{\left(\omega_{(1,2\ell)}\right)_{2k+1-2\ell}}{\om_{2k+1}\om_{2k+1-2\ell}} 
		\prod_{s\in\{m_1,\dots,m_{\ell}\}}\left(\omega_{(n^s_1,y^s_1),(n^s_2,y^s_2),\dots,(n^s_k,y^s_k)}\right)_{2k-1-2s}\\
		&=-\frac{1}{\om_{2k+1}}\sum_{j\in\{1,\dots \ell\}}D_j\\
		&\quad+\frac{\left(\omega_{(1,2\ell)}\right)_{2k+1-2\ell}}{\om_{2k+1}\om_{2k+1-2\ell}}
		\prod_{s\in\{m_1,\dots,m_{\ell}\}}\left(\omega_{(n^s_1,y^s_1),(n^s_2,y^s_2),\dots,(n^s_k,y^s_k)}\right)_{2k-1-2s},
    \end{split}
    \end{equation}
where
    \begin{equation*}
		\begin{split}
    D_j&=\left(\omega_{(1,2),(n^j_1,y^j_1),(n^j_2,y^j_2),\dots,(n^j_k,y^j_k)}\right)_{2k-1-2j}
		\prod_{s\in\{m_1,\dots m_{j-1}\}}\left( \omega_{(n^s_1,y^s_1),(n^s_2,y^s_2),\dots (n^s_k,y^s_k)}\right)_{2k-1-2s}\\
		&\quad\cdot\prod_{s\in\{m_{j+1}\dots m_{\ell}\}}\left( \omega_{(n^s_1,y^s_1),(n^s_2,y^s_2),\dots (n^s_k,y^s_k)}\right)_{2k+1-2s}.
		\end{split}
		\end{equation*}
In this last identity, a product without factors is considered equal to one. By combining \eqref{eq:N+1s1} and \eqref{eq:N+1s2} we obtain a formula of the type \eqref{eq:n=N} with $N+1$ in place of $N$. Therefore we have now proved \eqref{eq:n=N} for all $N\in\N$. Hence it follows that to get \eqref{eq:kernelmix-general}, it is enough to show that
		\begin{equation}\label{eq:kernelmix-generalell}
    \int_\D|B^\om_{N,\ell,z}(\z)|^p\nu(\z)\,dA(\z)
		\lesssim\int_0^{|z|}\frac{\widehat{\nu}(t)}{\widehat{\om}(t)^p(1-t)^{p(1-N)}}\,dt+1,\quad z\in\D,
    \end{equation}
for each $\ell=\ell_j$ with $j\in\{1,\dots,M\}$. But \eqref{eq:momentogeneral}, \eqref{LN} and Lemma~\ref{Lemma:weights-in-D-hat} yield
		\begin{equation}\label{eq:N+1s3}
    \begin{split}
		\frac{\prod_{s\in\{m_1,\dots,m_{\ell}\}}\left(\omega_{(n^s_1,y^s_1),(n^s_2,y^s_2),\dots,(n^s_k,y^s_k)}\right)_{2k+1-2s}}
		{\prod_{m=0}^{\ell}\om_{2k+1-2m}}
		\lesssim\frac{1}{\omega_{2k+1}k^{N}},
		\end{split}
		\end{equation}
Putting this together with \eqref{eq:N+1s1} and the Hardy-Littlewood inequality \cite[Theorem~6.3]{D}, we obtain
    \begin{equation*}\begin{split}
    \int_0^{2\pi}|B^\om_{N,\ell,z}(re^{i\t})|^p\,d\t
    \lesssim (A^{\ell}_{N}(\omega))^p
		+\sum_{n=1}^\infty\left(\frac{1}{n^{N-1}\om_{2n+1}}\right)^pn^{-2}r^{np}|z|^{np},\quad z\in\D,\quad0\le r<1,
    \end{split}
    \end{equation*}
where the right-hand side corresponds to  the case $\alpha=(1-N)p$ of Lemma~\ref{le:HLextended}. Therefore we deduce
    $$
    \int_0^{2\pi}|B^\om_{N,\ell,z}(re^{i\t})|^p\,d\t
    \lesssim (A^{\ell}_{N}(\omega))^p + \int_0^{|z|r}\frac{dt}{\widehat{\om}(t)^p(1-t)^{(1-N)p}},
		\quad z\in \D,\quad 0\le r<1,
    $$  
and it follows that
    \begin{equation*}
    \begin{split}
    \int_\D|B^\om_{N,\ell,z}(\z)|^p\nu(\z)\,dA(\z)
    &\lesssim\int_0^1\left(\int_0^{|z|r}\frac{dt}{\widehat{\om}(t)^p(1-t)^{(1-N)p}}\right)\nu(r)\,dr+1\\
		&=\int_0^{|z|}\frac{1}{\widehat{\om}(t)^p(1-t)^{(1-N)p}}\widehat{\nu}\left(\frac{t}{|z|}\right)\,dt+1\\
    &\le\int_0^{|z|}\frac{\widehat{\nu}(t)}{\widehat{\om}(t)^p(1-t)^{(1-N)p}}\,dt+1,\quad z\in\D.
    \end{split}
    \end{equation*}
Thus \eqref{eq:kernelmix-generalell} is satisfied and the proof of (i) is complete.

The assertion \eqref{eq:kernelmix-generalX} is an immediate consequence of \eqref{eq:kernelmix-general}, the trivial inequality $|1-\overline{z}\zeta|\ge1-|\zeta|$, the fact $\nu_{[Np]}\in\DDD$, which follows from Lemma~\ref{room}(i), and \cite[Theorem~1(ii)]{PR2016/1}. We underline here that this argument does not work for $\nu\in\DD$ in general, because $\nu_{[Np]}$ does not necessarily belong to $\DDD$ when $\nu\in\DD$ by \cite[Theorem~3]{PR2020}.      
\end{proof}

\section{Little Hankel operator}

We begin with an auxiliary result on weights. The following simple lemma is useful for our purposes. The first case says that $\DDD$ is closed under multiplication by the hat of another weight in $\DD$. The second case reveals that the same happens if the multiplication is done by a suitably small negative power of the hat.  For each radial weight $\om$ and $\b\in\RR$, write $\om_{[\b]}(z)=\om(z)(1-|z|)^\b$ for all $z\in\D$.

\begin{lemma}\label{room}
Let $\om\in\DDD$ and $\nu\in\DD$. Then the following statements hold: 
\begin{itemize}
\item[\rm(i)] $\omega\widehat{\nu}\in\DDD$ and $\widehat{\omega\widehat{\nu}}\asymp\widehat{\om}\widehat{\nu}$ on $[0,1)$;
\item[\rm(ii)] There exists $\gamma_0=\gamma_0(\om,\nu)>0$ such that, for each $\g\in(0,\g_0]$, we have $(\widehat{\nu})^{-\gamma}\om\in\DDD$, and 
	\begin{equation}\label{chochi}
	\int_r^1\frac{\om(s)}{\widehat{\nu}(s)^{\gamma}}\,ds\asymp\frac{\widehat{\om}(r)}{\widehat{\nu}(r)^{\gamma}},\quad 0\le r<1.
	\end{equation}
\end{itemize}
\end{lemma}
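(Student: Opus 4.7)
The plan for both parts is to first establish the asymptotic identity for the hat of the new weight, and then deduce the $\DDD$-membership by comparing ratios of hats. The membership part is then essentially a bookkeeping exercise with the constants from $\DD$ and $\Dd$.

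For part (i), the inequality $\widehat{\omega\widehat{\nu}}(r)\le\widehat{\omega}(r)\widehat{\nu}(r)$ is immediate from monotonicity of $\widehat{\nu}$. For the reverse direction I would use $\omega\in\Dd$: choosing $t=1-(1-r)/K$ with the constants $K,C_0$ from $\Dd$ gives $\int_r^t\omega(s)\,ds=\widehat{\omega}(r)-\widehat{\omega}(t)\ge(1-1/C_0)\widehat{\omega}(r)$. On the other hand, iterating $\nu\in\DD$ a finite number $k$ of times (so that $2^k\ge K$) yields $\widehat{\nu}(t)\ge\widehat{\nu}(1-(1-r)/2^k)\ge C_\nu^{-k}\widehat{\nu}(r)$. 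Bounding $\widehat{\nu}(s)\ge\widehat{\nu}(t)$ on $[r,t]$ then gives $\widehat{\omega\widehat{\nu}}(r)\ge\widehat{\nu}(t)(\widehat{\omega}(r)-\widehat{\omega}(t))\gtrsim\widehat{\omega}(r)\widehat{\nu}(r)$. With the equivalence in hand, $\omega\widehat{\nu}\in\DD$ follows immediately from $\widehat{\omega}(r)\le C_\omega\widehat{\omega}(\frac{1+r}{2})$ and $\widehat{\nu}(r)\le C_\nu\widehat{\nu}(\frac{1+r}{2})$, and $\omega\widehat{\nu}\in\Dd$ follows from the reverse doubling of $\widehat{\omega}$ combined with monotonicity of $\widehat{\nu}$, possibly after enlarging $K$ to absorb the implicit constants.

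For part (ii), write $\omega_\gamma=\omega\widehat{\nu}^{-\gamma}$. The lower bound $\widehat{\omega_\gamma}(r)\ge\widehat{\omega}(r)/\widehat{\nu}(r)^\gamma$ is free from monotonicity of $\widehat{\nu}$. For the matching upper bound I would apply Lemma~\ref{Lemma:weights-in-D-hat}(ii) to $\nu$ to get $\widehat{\nu}(s)^{-\gamma}\le C^\gamma\widehat{\nu}(r)^{-\gamma}\bigl(\frac{1-r}{1-s}\bigr)^{\beta_\nu\gamma}$ for $s\ge r$, which reduces matters to estimating $\int_r^1\omega(s)(1-s)^{-\beta_\nu\gamma}\,ds$. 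An integration by parts gives
\[
\int_r^1\frac{\omega(s)}{(1-s)^{\beta_\nu\gamma}}\,ds=\frac{\widehat{\omega}(r)}{(1-r)^{\beta_\nu\gamma}}+\beta_\nu\gamma\int_r^1\frac{\widehat{\omega}(s)}{(1-s)^{\beta_\nu\gamma+1}}\,ds,
\]
where the vanishing of the boundary term at $s=1$ and the convergence of the remaining integral both require Lemma~\ref{Lemma:weights-in-D-hat}(ii) for $\omega$ together with the smallness condition $\beta_\nu\gamma<\beta_\omega$. Invoking this lemma once more bounds the remaining integral by $\lesssim\widehat{\omega}(r)(1-r)^{-\beta_\nu\gamma}/(\beta_\omega-\beta_\nu\gamma)$, which yields \eqref{chochi} with constants that stay bounded as $\gamma\downarrow 0$.

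It remains to check $\omega_\gamma\in\DDD$. The $\DD$ condition is automatic because $\widehat{\nu}^{-\gamma}$ is increasing: $\widehat{\omega_\gamma}(r)\asymp\widehat{\omega}(r)\widehat{\nu}(r)^{-\gamma}\le C_\omega\widehat{\omega}(\frac{1+r}{2})\widehat{\nu}(\frac{1+r}{2})^{-\gamma}\asymp\widehat{\omega_\gamma}(\frac{1+r}{2})$. The $\Dd$ condition is the delicate point and demands a second smallness condition on $\gamma$: combining the equivalence with $\omega\in\Dd$ and the iterated doubling $\widehat{\nu}(r)\le C_\nu^k\widehat{\nu}(1-(1-r)/K)$ from part~(i), one obtains
\[
\frac{\widehat{\omega_\gamma}(r)}{\widehat{\omega_\gamma}(1-(1-r)/K)}\gtrsim C_0\left(\frac{\widehat{\nu}(1-(1-r)/K)}{\widehat{\nu}(r)}\right)^{\gamma}\ge C_0 C_\nu^{-k\gamma},
\]
which exceeds $1$ for $\gamma$ sufficiently small. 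Taking $\gamma_0$ to be the minimum of this bound and $\beta_\omega/\beta_\nu$ (from the integration by parts) suffices. The main obstacle I anticipate is precisely this simultaneous smallness on $\gamma$ in part~(ii), together with bookkeeping the implicit constants carefully enough that the reverse doubling of $\omega$ is not destroyed by the perturbation $\widehat{\nu}^{-\gamma}$.
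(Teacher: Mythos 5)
Your overall strategy---prove the asymptotic identity for the hat of the new weight first, then read off membership in $\DD$ and $\Dd$ from it---is the same as the paper's, and part (i) is correct: your lower bound via $\int_r^{1-\frac{1-r}{K}}\om(s)\,ds\ge(1-1/C_0)\widehat{\om}(r)$ combined with the iterated doubling of $\widehat{\nu}$ is a harmless variant of the argument in the text, which instead inserts the pointwise bound $\widehat{\nu}(s)\gtrsim\widehat{\nu}(r)(1-s)^\beta(1-r)^{-\beta}$ from Lemma~\ref{Lemma:weights-in-D-hat}(ii). The concluding remarks on $\om\widehat{\nu}\in\Dd$ and on $(\widehat{\nu})^{-\gamma}\om\in\DDD$ (enlarging $K$ to $K^m$ to absorb the multiplicative constants into the reverse-doubling constant) are standard and fine.

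The gap is in part (ii), at the integration by parts. You assert that the vanishing of the boundary term and the convergence of $\int_r^1\widehat{\om}(s)(1-s)^{-\beta_\nu\gamma-1}\,ds$ follow from ``Lemma~\ref{Lemma:weights-in-D-hat}(ii) for $\om$'' under the smallness condition $\beta_\nu\gamma<\beta_\om$. But that inequality, $\widehat{\om}(r)\le C\bigl(\frac{1-r}{1-s}\bigr)^{\beta_\om}\widehat{\om}(s)$ for $r\le s$, is a \emph{lower} bound on $\widehat{\om}(s)$ near the boundary, namely $\widehat{\om}(s)\gtrsim(1-s)^{\beta_\om}$; it provides no decay at all, and monotonicity alone gives only $\widehat{\om}(s)\le\widehat{\om}(r)$, under which $\int_r^1(1-s)^{-\beta_\nu\gamma-1}\,ds$ diverges. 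What the step actually requires is the \emph{upper} bound $\widehat{\om}(t)\le C\bigl(\frac{1-t}{1-r}\bigr)^{\a}\widehat{\om}(r)$ for $\a\in(0,\a_0]$, i.e.\ the characterization \eqref{Eq:Dd-characterization} of $\om\in\Dd$---available here precisely because $\om\in\DDD=\DD\cap\Dd$---and the smallness condition must read $\gamma<\a_0(\om)/\beta(\nu)$ with $\a_0$ the $\Dd$-exponent of $\om$, not $\gamma<\beta_\om/\beta_\nu$ with the $\DD$-exponent. Once this substitution is made your computation closes and is essentially the paper's proof, modulo pulling $\widehat{\nu}(r)^{-\gamma}$ out of the integral at the outset rather than keeping the quasi-monotone quantity $(1-s)^{\beta}/\widehat{\nu}(s)$ inside as the paper does.
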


\begin{proof}
(i). It is clear that $\widehat{\omega\widehat{\nu}}\le\widehat{\om}\widehat{\nu}$ on $[0,1)$. Let $\beta=\beta(\nu)>0$ be that of Lemma~\ref{Lemma:weights-in-D-hat}(ii), and observe that $\om\in\Dd$ if
	\begin{equation}\label{Eq:d-check}
	\widehat{\om}(r)\le C'\int_r^{1-\frac{1-r}{K}}\om(t)\,dt,\quad0\le r<1,
	\end{equation}
for some constant $C'=C'(\om)>1$. Then the lemma together with \eqref{Eq:d-check} yields
	\begin{equation*}
	\begin{split}
	\widehat{\om\widehat{\nu}}(r)
	&\ge\frac{\widehat{\nu}(r)}{C(1-r)^\beta}\int_r^1\omega_{[\beta]}(s)\,ds
	\ge\frac{\widehat{\nu}(r)}{C(1-r)^\beta}\int_{r}^{1-\frac{1-r}{K}}\omega_{[\beta]}(s)\,ds\\
	&\ge\frac{\widehat{\nu}(r)}{C(1-r)^\beta}\frac{(1-r)^\beta}{K^\beta}\int_{r}^{1-\frac{1-r}{K}}\omega(s)\,ds
	\ge\frac{\widehat{\nu}(r)\widehat{\om}(r)}{CC'K^\beta}, \quad 0\le r<1,
	\end{split}
	\end{equation*}
and thus $\widehat{\omega\widehat{\nu}}\asymp\widehat{\om}\widehat{\nu}$ on $[0,1)$. Standard arguments involving this asymptotic equality, the definition of $\DD$ and \eqref{Eq:d-check} show that $\om\widehat{\nu}\in\DDD$.

(ii). By~\cite[(2.27)]{PelRat2020}, $\om\in\Dd$ if and only if there exist constants $C=C(\om)>0$ and $\a_0=\a_0(\om)>0$ such that
	\begin{equation}\label{Eq:Dd-characterization}
	\widehat{\om}(t)\le C\left(\frac{1-t}{1-r}\right)^\a\widehat{\om}(r),\quad 0\le r\le t<1,
	\end{equation}
for all $\alpha\in(0,\alpha_0]$. Let $\gamma=\gamma(\om,\nu)\in(0,\alpha_0/\beta)$, where $\beta=\beta(\nu)>0$ is that of Lemma~\ref{Lemma:weights-in-D-hat}(ii). Then
	$$
	\lim_{s\to1^-}\frac{\widehat{\om}(s)}{\widehat{\nu}(s)^{\gamma}}
	\lesssim
	\lim_{s\to1^-}\frac{(1-s)^{\alpha_0}}{\widehat{\nu}(s)^\gamma}=0.
	$$
Two integrations by parts together with \eqref{Eq:Dd-characterization} and Lemma~\ref{Lemma:weights-in-D-hat}(ii) yield 
	\begin{equation*}
	\begin{split}
	\frac{\widehat{\om}(r)}{\widehat{\nu}(r)^{\gamma}}
	&\le\int_r^1\frac{\om(s)}{\widehat{\nu}(s)^{\gamma}}\,ds
	=\frac{\widehat{\om}(r)}{\widehat{\nu}(r)^{\gamma}}
	+\gamma\int_r^1\frac{\widehat{\om}(s)}{\widehat{\nu}(s)^{\gamma+1}}\nu(s)\,ds\\
	&\lesssim\frac{\widehat{\om}(r)}{\widehat{\nu}(r)^{\gamma}}
	+\frac{\widehat{\om}(r)}{(1-r)^{\alpha_0}}
	\gamma\int_r^1(1-s)^{\alpha_0}\frac{\nu(s)}{\widehat{\nu}(s)^{\gamma+1}}\,ds\\
	&\lesssim\frac{\widehat{\om}(r)}{\widehat{\nu}(r)^{\gamma}}
	+\frac{\widehat{\om}(r)}{(1-r)^{\alpha_0}}
	\int_r^1\left(\frac{(1-s)^\beta}{\widehat{\nu}(s)}\right)^\gamma(1-s)^{\alpha_0-1-\gamma\beta}\,ds\\
	&\lesssim\frac{\widehat{\om}(r)}{\widehat{\nu}(r)^{\gamma}}
	+\frac{\widehat{\om}(r)}{(1-r)^{\alpha_0}}
	\left(\frac{(1-r)^\beta}{\widehat{\nu}(r)}\right)^\gamma\int_r^1(1-s)^{\alpha_0-1-\gamma\beta}\,ds
	\lesssim\frac{\widehat{\om}(r)}{\widehat{\nu}(r)^{\gamma}},\quad 0\le r<1.
	\end{split}
	\end{equation*}
Therefore \eqref{chochi} is satisfied, and standard arguments show that $(\widehat{\nu})^{-\gamma}\om\in\DDD$.
\end{proof}

Our second auxiliary result in this section shows that $h^\om_{\overline{f}}(g)=h^\om_{\overline{P_\om(f)}}(g)$ for all $g\in H^\infty$, provided $f\in L^1_{\om_{\log}}$ with $\om\in\DD$.

\begin{lemma}\label{lemma:hankel-P}
Let $1<p<\infty$, $\om\in\DD$ and $f\in L^1_{\om_{\log}}$. Then $h^\om_{\overline{f}}(g)=h^\om_{\overline{P_\om(f)}}(g)$ for all $g\in H^\infty$.
\end{lemma}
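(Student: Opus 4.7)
The plan is to unfold the definition of $P_\om(f)$ inside $h^\om_{\overline{P_\om(f)}}(g)(z)$, swap the order of integration via Fubini, and collapse the inner integral by the reproducing property of the Bergman kernel. For fixed $z\in\D$ and $g\in H^\infty$, the function $F(\zeta)=g(\zeta)B^\om_z(\zeta)$ belongs to $A^2_\om$ since $g$ is bounded and $B^\om_z\in A^2_\om$. Expanding $\overline{P_\om(f)(\zeta)}=\int_\D\overline{f(w)}B^\om_\zeta(w)\om(w)\,dA(w)$, using the conjugate symmetry $B^\om_\zeta(w)=\overline{B^\om_w(\zeta)}$, and applying Fubini reduces the inner $\zeta$-integral to $\int_\D\overline{B^\om_w(\zeta)}F(\zeta)\om(\zeta)\,dA(\zeta)=P_\om(F)(w)=F(w)=g(w)B^\om_z(w)$. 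The double integral then collapses to
\[
\int_\D\overline{f(w)}g(w)B^\om_z(w)\om(w)\,dA(w)=h^\om_{\overline{f}}(g)(z),
\]
which is the claimed identity.

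The key step, and main obstacle, is justifying Fubini; namely, verifying the absolute integrability
\[
\int_\D\int_\D|f(w)|\,|B^\om_w(\zeta)|\,|g(\zeta)|\,|B^\om_z(\zeta)|\,\om(w)\om(\zeta)\,dA(w)\,dA(\zeta)<\infty.
\]
Since $g\in H^\infty$, this amounts to controlling $\int_\D|f(w)|\om(w)\,I(z,w)\,dA(w)$, where $I(z,w)=\int_\D|B^\om_w(\zeta)B^\om_z(\zeta)|\om(\zeta)\,dA(\zeta)$. A crude Cauchy--Schwarz bound gives $I(z,w)\le\sqrt{B^\om_z(z)B^\om_w(w)}\asymp(\widehat{\om}(z)(1-|z|)\widehat{\om}(w)(1-|w|))^{-1/2}$, which is too weak in general to marry with $f\in L^1_{\om_{\log}}$. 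The plan is to refine this by factoring $B^\om_z(\zeta)=(1-\overline{z}\zeta)^{-1}\cdot(1-\overline{z}\zeta)B^\om_z(\zeta)$ and applying Lemma~\ref{kernelmix-general} with $N=1$ to the second factor, together with $\om\in\DD$ and Lemma~\ref{Lemma:weights-in-D-hat}(iii) to handle the resulting integral against $|1-\overline{z}\zeta|^{-1}$; this should produce an $I(z,w)$-bound which, once integrated against $|f(w)|\om(w)$, picks up precisely the factor $\log(e/(1-|w|))$ accommodated by the $\om_{\log}$-weighting.

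A cleaner fallback is approximation. Set $f_n=f\chi_{\{|w|<1-1/n\}}\in L^\infty$ with compact support in $\D$; Fubini then holds trivially, giving $h^\om_{\overline{f_n}}(g)=h^\om_{\overline{P_\om(f_n)}}(g)$ for every $n$. The power series expansion $B^\om_z(\zeta)=\frac12\sum_{k\ge 0}(\overline{z}\zeta)^k/\om_{2k+1}$ shows that $\zeta\mapsto B^\om_z(\zeta)$ is bounded on $\D$ for each fixed $z\in\D$ (with a bound depending on $z$), so the convergence $h^\om_{\overline{f_n}}(g)(z)\to h^\om_{\overline{f}}(g)(z)$ follows from dominated convergence and $f\in L^1_\om$. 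Symmetrically, $P_\om(f_n)(\zeta)\to P_\om(f)(\zeta)$ pointwise for each fixed $\zeta\in\D$, and one obtains $h^\om_{\overline{P_\om(f_n)}}(g)(z)\to h^\om_{\overline{P_\om(f)}}(g)(z)$ once an integrable dominator for $|P_\om(f_n)(\zeta)|\,|g(\zeta)B^\om_z(\zeta)|\om(\zeta)$ is in place. The $L^1_{\om_{\log}}$-hypothesis combined with the $A^1_\om$-norm control of $B^\om_z$ deduced from the estimates of Section~2 supplies exactly such a dominator, completing the argument and transferring the crux of the proof to the analysis already developed for the modified kernels.
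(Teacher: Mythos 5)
Your core argument --- unfold $P_\om(f)$, apply Fubini, and collapse the inner integral by the reproducing property applied to $gB^\om_z\in A^2_\om$ --- is exactly the paper's proof, and it is correct. Where you go astray is in the justification of Fubini. You correctly observe that crude Cauchy--Schwarz on $I(z,w)=\int_\D|B^\om_w(\zeta)B^\om_z(\zeta)|\om(\zeta)\,dA(\zeta)$ is too weak, but the refinement you then propose (factoring out $(1-\overline{z}\zeta)^{N}$ and invoking Lemma~\ref{kernelmix-general}) is both unnecessary and left as a speculative plan rather than an estimate; note also that Lemma~\ref{kernelmix-general} is stated for $p\ge2$ and does not directly control a product of two kernels against $\om$. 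The observation that resolves the matter is one you yourself make in your fallback paragraph: for \emph{fixed} $z$ the kernel $B^\om_z$ is bounded on $\D$, so
\begin{equation*}
\int_\D\int_\D|f(w)|\,|B^\om_w(\zeta)|\,|g(\zeta)|\,|B^\om_z(\zeta)|\,\om(w)\om(\zeta)\,dA(\zeta)\,dA(w)
\le\|B^\om_z\|_{H^\infty}\|g\|_{H^\infty}\int_\D|f(w)|\,\|B^\om_w\|_{L^1_\om}\,\om(w)\,dA(w),
\end{equation*}
and since $\om\in\DD$ gives $\|B^\om_w\|_{L^1_\om}\asymp\log\frac{e}{1-|w|}$ by \cite[Theorem~1]{PR2016/1}, the right-hand side is $\lesssim\|f\|_{L^1_{\om_{\log}}}<\infty$. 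This is precisely how the paper justifies Fubini, and it is the only place the hypothesis $f\in L^1_{\om_{\log}}$ (rather than $f\in L^1_\om$) is used. Your truncation fallback also works and ultimately rests on the same two facts (boundedness of $B^\om_z$ for fixed $z$ and the logarithmic $L^1_\om$-norm of the kernel), so it buys nothing extra; one small slip there is the claim $f\chi_{\{|w|<1-1/n\}}\in L^\infty$, which need not hold for $f\in L^1_{\om_{\log}}$, though it is not actually needed since compact support of $f_n$ together with $f\in L^1_\om$ already makes the double integral finite.
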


\begin{proof}
Tonelli's theorem and \cite[Theorem~1]{PR2016/1} yield	
	\begin{equation*}
	\begin{split}
	\left|h^\om_{\overline{P_\om(f)}}(g)(\xi)\right|
	&\le\|B_\xi^\om\|_{H^\infty}\|g\|_{H^\infty}\int_\D\left(\int_\D|f(\zeta)||B_z^\om(\zeta)|\om(\zeta)\,dA(\zeta)\right)\om(z)\,dA(z)\\
	&=\|B_\xi^\om\|_{H^\infty}\|g\|_{H^\infty}\int_\D|f(\zeta)|\left(\int_\D|B_\zeta^\om(z)|\om(z)\,dA(z)\right)\om(\zeta)\,dA(\zeta)\\
	&\lesssim\|B_\xi^\om\|_{H^\infty}\|g\|_{H^\infty}\|f\|_{L^1_{\om_{\log}}}<\infty,\quad \xi\in\D,
	\end{split}
	\end{equation*}
and therefore Fubini's theorem, and the fact that $g\in A^{p}_\om\subset A^1_\om$, yields
	\begin{equation}\label{suvlkjh}
	\begin{split}
	h^\om_{\overline{P_\om(f)}}(g)(\xi)
	&=\int_\D\left(\overline{\int_\D f(\zeta)\overline{B_z^\om(\zeta)}\om(\zeta)\,dA(\zeta)}\right)g(z)B_\xi^\om(z)\om(z)\,dA(z)\\
	&=\int_\D\overline{f(\zeta)}\left(\int_\D g(z)B_\xi^\om(z)\overline{B_\zeta^\om(z)}\om(z)\,dA(z)\right)\om(\zeta)\,dA(\zeta)\\
	&=\int_\D\overline{f(\zeta)}g(\zeta)B_\xi^\om(\zeta)\om(\zeta)\,dA(\zeta)
	=h^\om_{\overline{f}}(g)(\xi),\quad \xi\in\D.
	\end{split}
	\end{equation}
Thus $h^\om_{\overline{f}}(g)=h^\om_{\overline{P_\om(f)}}(g)$ for all $g\in H^\infty$ as claimed.
\end{proof}

The next lemma reveals that $\overline{P_\om V_{\om,\nu}(f)}=\overline{P_\om(f)}$ under appropriate hypotheses on $f$ and weights involved.

\begin{lemma}\label{lemma:V}
Let $\om$ be a radial weight and $\nu:\D\to[0,\infty)$ a radial function such that $\om\nu$ is a weight. Further, let $f:\D\to\C$ be measurable such that the function $z\mapsto f(z)\|B^{\om\nu}_z\|_{L^1_{\om\nu}}$ belongs to $L^1_\om$. Then $\overline{P_\om V_{\om,\nu}(f)}=\overline{P_\om(f)}$.
\end{lemma}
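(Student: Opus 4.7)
The plan is to justify a Fubini interchange in the double integral defining $P_\om V_{\om,\nu}(f)$, and then collapse the resulting inner kernel integral via the reproducing property of $A^2_{\om\nu}$. Expanding $V_{\om,\nu}(f)$ according to \eqref{V},
\begin{equation*}
P_\om V_{\om,\nu}(f)(w)=\int_\D\int_\D f(\z)\,\overline{B^{\om\nu}_z(\z)}\,\overline{B^\om_w(z)}\,\om(\z)(\om\nu)(z)\,dA(\z)\,dA(z),
\end{equation*}
and the aim is to show that the $z$-integral, after swapping, equals exactly $\overline{B^\om_w(\z)}$.

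To justify the interchange I would apply Tonelli to the absolute values. Using the symmetry $|B^{\om\nu}_z(\z)|=|B^{\om\nu}_\z(z)|$, the $z$-integration produces $\|B^{\om\nu}_\z\|_{L^1_{\om\nu}}$ once one pulls out the factor $\|B^\om_w\|_{H^\infty}$, which is finite for each fixed $w\in\D$ because $\om_{2n+1}^{1/n}\to 1$ for every radial weight $\om$ (so the Taylor series of $B^\om_w$ converges absolutely on $\overline{\D}$). The remaining integral is
$\|B^\om_w\|_{H^\infty}\int_\D|f(\z)|\,\|B^{\om\nu}_\z\|_{L^1_{\om\nu}}\,\om(\z)\,dA(\z),$
which is finite exactly by the hypothesis on $f$, so Fubini applies.

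For the inner integral $I(\z,w)=\int_\D\overline{B^{\om\nu}_z(\z)}\,\overline{B^\om_w(z)}(\om\nu)(z)\,dA(z)$, I would use that $B^\om_w\in H^\infty\subset A^2_{\om\nu}$; the latter inclusion holds because $\om\nu\in L^1([0,1))$ is a radial weight, so $\int_\D(\om\nu)(z)\,dA(z)<\infty$. The reproducing property of $A^2_{\om\nu}$ applied to $B^\om_w$ then gives
\begin{equation*}
B^\om_w(\z)=\int_\D B^\om_w(z)\,\overline{B^{\om\nu}_\z(z)}\,(\om\nu)(z)\,dA(z),
\end{equation*}
and conjugating this identity, together with the symmetry $\overline{B^{\om\nu}_z(\z)}=B^{\om\nu}_\z(z)$ coming from $K(z,\z)=\overline{K(\z,z)}$ for reproducing kernels, yields $I(\z,w)=\overline{B^\om_w(\z)}$. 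Substituting back gives
$P_\om V_{\om,\nu}(f)(w)=\int_\D f(\z)\,\overline{B^\om_w(\z)}\,\om(\z)\,dA(\z)=P_\om(f)(w),$
and the claimed identity $\overline{P_\om V_{\om,\nu}(f)}=\overline{P_\om(f)}$ follows by conjugation.

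The main subtlety is keeping track of the conjugations between the two distinct reproducing kernels associated with the measures $\om\,dA$ and $(\om\nu)\,dA$; apart from this bookkeeping, only Tonelli/Fubini and the reproducing property in $A^2_{\om\nu}$ are needed, and in particular no hypothesis on $\om$ beyond being a radial weight enters the argument.
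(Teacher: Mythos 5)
Your argument is correct and is essentially the paper's own proof: both expand $P_\om V_{\om,\nu}(f)$ as a double integral, justify the interchange by Tonelli using exactly the hypothesis that $z\mapsto f(z)\|B^{\om\nu}_z\|_{L^1_{\om\nu}}$ lies in $L^1_\om$, and then collapse the inner integral by the reproducing property of $A^2_{\om\nu}$ applied to $B^\om_w\in H^\infty\subset A^2_{\om\nu}$. The extra details you supply (the kernel symmetry and the finiteness of $\|B^\om_w\|_{H^\infty}$ for fixed $w$) are correct and merely make explicit what the paper leaves implicit.
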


\begin{proof}	
Since the function $z\mapsto f(z)\|B^{\om\nu}_z\|_{L^1_{\om\nu}}$ belongs to $L^1_\om$ by the hypothesis, Fubini's theorem yields
	\begin{equation*}
	\begin{split}
	\overline{P_\om V_{\om,\nu}(f)(\zeta)}
	&=\int_\D\left(\nu(u)\int_\D\overline{f(v)}B^{\om\nu}_u(v)\om(v)\,dA(v)\right)B^\om_\zeta(u)\om(u)\,dA(u)\\
	&=\int_\D\overline{f(v)}\left(\int_\D B_\zeta^\om(u)\overline{B^{\om\nu}_v(u)}\om(u)\nu(u)\,dA(u)\right)\om(v)\,dA(v)\\
	&=\int_\D\overline{f(v)}B^\omega_\zeta(v)\om(v)\,dA(v)
    =\overline{P_\om(f)(\zeta)},\quad\zeta\in\D,
	\end{split}
	\end{equation*}
and thus the assertion is proved.
\end{proof}

With these preparations we can show that $V_{\om,\nu}(f)\in L^\infty$ is a sufficient condition for $h^\om_{\overline{f}}: A^p_\om\to\overline{A^p_\om}$ to be bounded. 

\begin{proposition}\label{Proposition:sufficient-V}
Let $1<p<\infty$, $\om\in\DDD$ and $f\in L^1_{\om_{\log}}$. Further, let $\nu:\D\to[0,\infty)$ be a radial function such that $\om\nu$ is a weight, and assume that the function $z\mapsto f(z)\|B^{\om\nu}_z\|_{L^1_{\om\nu}}$ belongs to $L^1_\om$.
If $V_{\om,\nu}(f)\in L^\infty$, then $h^\om_{\overline{f}}: A^p_\om\to 
\overline{A^p_\om}$ is bounded and
	$$
	\|h^\om_{\overline{f}}\|_{A^p_\om\to \overline{A^p_\om}}\lesssim\|V_{\om,\nu}(f)\|_{L^\infty}.
	$$
\end{proposition}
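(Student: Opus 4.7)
The plan is to exploit the fact that $V_{\om,\nu}(f)$ is an $L^\infty$ replacement of $f$ that sees the same Bergman projection: by Lemma~\ref{lemma:V}, $\overline{P_\om V_{\om,\nu}(f)}=\overline{P_\om(f)}$, and by Lemma~\ref{lemma:hankel-P} the little Hankel operator depends on the symbol only through its projection. Combining these reduces the symbol from $\overline{f}$ to $\overline{V_{\om,\nu}(f)}$, and then the $L^\infty$-bound on $V_{\om,\nu}(f)$ together with the $L^p_\om$-boundedness of the Bergman projection for $\om\in\DDD$, $1<p<\infty$, finishes the argument.

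First I would check that $V_{\om,\nu}(f)\in L^1_{\om_{\log}}$: since $V_{\om,\nu}(f)\in L^\infty$ by hypothesis, it suffices that $\log(e/(1-|\cdot|))$ is integrable against $\om$, which follows from the decay estimate $\widehat{\om}(s)\lesssim(1-s)^\beta$ given by Lemma~\ref{Lemma:weights-in-D-hat}(ii) with $r=0$. Then, for $g\in H^\infty$, I would apply Lemma~\ref{lemma:hankel-P} to $f$, then Lemma~\ref{lemma:V}, and then Lemma~\ref{lemma:hankel-P} again to $V_{\om,\nu}(f)$ to obtain the chain
\[
h^\om_{\overline{f}}(g)=h^\om_{\overline{P_\om(f)}}(g)=h^\om_{\overline{P_\om V_{\om,\nu}(f)}}(g)=h^\om_{\overline{V_{\om,\nu}(f)}}(g).
\]
Next, taking conjugates and using the kernel symmetry $\overline{B^\om_\xi(z)}=B^\om_z(\xi)$, one identifies
\[
\overline{h^\om_{\overline{V_{\om,\nu}(f)}}(g)(\xi)}=\int_\D V_{\om,\nu}(f)(z)\overline{g(z)}B^\om_z(\xi)\om(z)\,dA(z)=P_\om\bigl(V_{\om,\nu}(f)\overline{g}\bigr)(\xi),
\]
so that $\|h^\om_{\overline{f}}(g)\|_{\overline{A^p_\om}}=\|P_\om(V_{\om,\nu}(f)\overline{g})\|_{A^p_\om}\lesssim\|V_{\om,\nu}(f)\|_{L^\infty}\|g\|_{A^p_\om}$, after which I would extend from $H^\infty$ to $A^p_\om$ by density.

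The main obstacle is verifying the integrability/Fubini hypotheses needed to legitimately carry out the three-step reduction in the chain above; each link depends on a different integrability assumption built into the cited lemmas, and one must keep careful track of them. The other ingredient requiring comment is the $L^p_\om\to L^p_\om$ boundedness of $P_\om$ for $\om\in\DDD$, $1<p<\infty$, which is a standard fact proved via Schur's test using the reproducing kernel size estimate (the $N=0$ analogue of the content of Lemma~\ref{kernelmix-general}, or equivalently the $\lambda=0$ case of Lemma~\ref{Lemma:weights-in-D-hat}(iii) together with the doubling property).
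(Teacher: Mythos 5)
Your argument is essentially the paper's proof: the same three-link chain $h^\om_{\overline{f}}(g)=h^\om_{\overline{P_\om(f)}}(g)=h^\om_{\overline{P_\om V_{\om,\nu}(f)}}(g)=h^\om_{\overline{V_{\om,\nu}(f)}}(g)$ via Lemmas~\ref{lemma:hankel-P} and \ref{lemma:V}, followed by the $L^p_\om$-boundedness of the (maximal) Bergman projection and density of $H^\infty$ in $A^p_\om$. One correction to your justification that $V_{\om,\nu}(f)\in L^1_{\om_{\log}}$: Lemma~\ref{Lemma:weights-in-D-hat}(ii) with $r=0$ yields the \emph{lower} bound $\widehat{\om}(s)\gtrsim(1-s)^\beta$, not the decay $\widehat{\om}(s)\lesssim(1-s)^\alpha$ you need; the correct source is \eqref{Eq:Dd-characterization} with $r=0$, available because $\om\in\DDD\subset\Dd$ (for $\om\in\DD$ alone the logarithm need not be $\om$-integrable). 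Also note that the $L^p_\om$-boundedness of $P_\om^+$ for general $\om\in\DDD$ is not an elementary Schur-test exercise but the content of \cite[Theorem~9]{PelRat2020}, which is what the paper cites at this step.
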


\begin{proof}
By Lemmas~\ref{lemma:hankel-P} and \ref{lemma:V}, and \cite[Theorem~9]{PelRat2020} we have
	\begin{equation*}
	\begin{split}
	\|h^\om_{\overline{f}}(g)\|_{L^{p}_\om}
	&=\|h^\om_{\overline{P_\om(f)}}(g)\|_{L^{p}_\om}
	=\|h^\om_{\overline{P_\om V_{\om,\nu}(f)}}(g)\|_{L^{p}_\om}
	=\|h^\om_{\overline{V_{\om,\nu}(f)}}(g)\|_{L^{p}_\om}\\
	&\le\|P_\om^+(|V_{\om,\nu}(f)g|)\|_{L^{p}_\om}
	\lesssim\|V_{\om,\nu}(f)g\|_{L^{p}_\om}
	\le\|V_{\om,\nu}(f)\|_{L^\infty}\|g\|_{A^{p}_\om},\quad g\in H^\infty,
	\end{split}
	\end{equation*}
and the assertion follows from the BLT-theorem~\cite[Theorem~I.7]{ReedSimon}, because $H^\infty$ is dense in $A^p_\om$ since $\om$ is radial.
\end{proof}

The necessity of the condition $V_{\om,\nu}(f)\in L^\infty$ for the boundedness of $h^\om_{\overline{f}}: A^p_\om\to\overline{A^p_\om}$ is established next.

\begin{proposition}\label{proposition:Bounded-necessary}
Let $1<p<\infty$, $\om\in\DDD$ and $f\in L^1_{\om_{\log}}$ such that $h^\om_{\overline{f}}:A^p_\om\to\overline{A^p_\om}$ is bounded. Then there exists $n_0=n_0(\omega)\in\N$ such that, for each $n\ge n_0$, the weight $\nu(z)=(1-|z|)^n$ satisfies $\|V_{\om,\nu}(f)\|_{L^\infty}\lesssim\|h^\om_{\overline{f}}\|_{A^p_\om\to\overline{A^p_\om}}$.
\end{proposition}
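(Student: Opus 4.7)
The plan is to reduce to the case of an analytic symbol by establishing the pointwise identity $V_{\om,\nu}(f)=V_{\om,\nu}(P_\om(f))$, and then to combine Theorem~\ref{proposition:Bloch-necessary} with the Bloch characterization of $V_{\om,n}\cap\H(\D)$ supplied by Proposition~\ref{pr:Blochdescription}. This route bypasses any direct construction of test functions for the little Hankel operator.

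First I would establish $V_{\om,\nu}(f)(z)=V_{\om,\nu}(P_\om(f))(z)$ for every $z\in\D$. By Lemma~\ref{room}(i) we have $\om\nu\in\DDD$, and so \cite[Theorem~1]{PR2016/1} asserts that $B^{\om\nu}_z\in H^\infty$ for each fixed $z\in\D$. Writing $\gamma_k=\int_\D f(\zeta)\overline{\zeta}^k\om(\zeta)\,dA(\zeta)$ and expanding
\begin{equation*}
\overline{B^{\om\nu}_z(\zeta)}=\sum_{k\ge 0}\frac{(\overline{\zeta}z)^k}{2(\om\nu)_{2k+1}},
\end{equation*}
term-by-term integration against $f\om\,dA$ is legitimate, since the partial sums are dominated by $\sum_{k\ge0}|z|^k/(2(\om\nu)_{2k+1})<\infty$ and $f\in L^1_{\om_{\log}}\subset L^1_\om$. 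This yields
\begin{equation*}
\int_\D f(\zeta)\overline{B^{\om\nu}_z(\zeta)}\om(\zeta)\,dA(\zeta)=\sum_{k\ge 0}\frac{z^k\gamma_k}{2(\om\nu)_{2k+1}}.
\end{equation*}
The Taylor expansion $P_\om(f)(\zeta)=\sum_{j\ge 0}\gamma_j\zeta^j/(2\om_{2j+1})$ combined with the orthogonality $\int_\D \zeta^j\overline{\zeta}^k\om\,dA=2\om_{2k+1}\delta_{jk}$ shows that the same series represents the analogous integral with $f$ replaced by $P_\om(f)$, and the identity follows after multiplying by $\nu(z)$.

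Second, since $P_\om(f)\in\H(\D)$, the equivalence from Proposition~\ref{pr:Blochdescription} gives, for some threshold $n_0=n_0(\om)\in\N$ and every $n\ge n_0$,
\begin{equation*}
\|V_{\om,\nu}(f)\|_{L^\infty}=\|V_{\om,\nu}(P_\om(f))\|_{L^\infty}\asymp\|P_\om(f)\|_{\B}.
\end{equation*}

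Third, Lemma~\ref{lemma:hankel-P} gives $h^\om_{\overline{f}}(g)=h^\om_{\overline{P_\om(f)}}(g)$ for every $g\in H^\infty$. Since $H^\infty$ is dense in $A^p_\om$ and $h^\om_{\overline{f}}$ is bounded by hypothesis, $h^\om_{\overline{P_\om(f)}}$ extends uniquely to a bounded operator $A^p_\om\to\overline{A^p_\om}$ with $\|h^\om_{\overline{P_\om(f)}}\|\le\|h^\om_{\overline{f}}\|$. Because $P_\om(f)$ is analytic, Theorem~\ref{proposition:Bloch-necessary} then yields $\|P_\om(f)\|_{\B}\asymp\|h^\om_{\overline{P_\om(f)}}\|$, and concatenating the three estimates produces
\begin{equation*}
\|V_{\om,\nu}(f)\|_{L^\infty}\asymp\|P_\om(f)\|_{\B}\asymp\|h^\om_{\overline{P_\om(f)}}\|\lesssim\|h^\om_{\overline{f}}\|_{A^p_\om\to\overline{A^p_\om}},
\end{equation*}
which is the desired pointwise bound.

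The main subtlety lies in the first step, namely in justifying the absolute convergence of $\sum_k z^k\gamma_k/(2(\om\nu)_{2k+1})$ for $|z|<1$ and the term-by-term integration. Both points hinge on the moment decay $(\om\nu)_{2k+1}\asymp\om_{2k+1}/(k+1)^n$, which follows from \eqref{eq:momentogeneral} together with Lemma~\ref{Lemma:weights-in-D-hat}, and on the uniform dominated convergence of the Taylor partial sums of $B^{\om\nu}_z$ on $\D$ for each fixed $z\in\D$.
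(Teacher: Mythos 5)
Your argument is circular. The chain of estimates you propose runs
\begin{equation*}
\|V_{\om,\nu}(f)\|_{L^\infty}=\|V_{\om,\nu}(P_\om(f))\|_{L^\infty}\asymp\|P_\om(f)\|_{\B}\asymp\|h^\om_{\overline{P_\om(f)}}\|\le\|h^\om_{\overline{f}}\|,
\end{equation*}
and the step that carries all the weight is the middle equivalence $\|P_\om(f)\|_{\B}\lesssim\|h^\om_{\overline{P_\om(f)}}\|$, i.e.\ the implication ``$h^\om_{\overline{\varphi}}$ bounded with $\varphi\in\H(\D)$ implies $\varphi\in\B$'' from Theorem~\ref{proposition:Bloch-necessary}. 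But in this paper that implication is \emph{deduced from} Proposition~\ref{proposition:Bounded-necessary}: the converse half of the proof of Theorem~\ref{proposition:Bloch-necessary} explicitly invokes Proposition~\ref{proposition:Bounded-necessary} to produce $V_{\om,\nu}(f)\in L^\infty$ and only then applies Proposition~\ref{pr:Blochdescription} to conclude $f\in\B$. (The same is true of Theorem~\ref{Theorem:hankel-characterization}, whose ``only if'' half is exactly Proposition~\ref{proposition:Bounded-necessary}.) So you are using a downstream consequence of the statement to prove the statement; nothing in your proposal supplies an independent proof that boundedness of a little Hankel operator with anti-analytic symbol forces the symbol into $\B$. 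Your first step, the identity $V_{\om,\nu}(f)=V_{\om,\nu}(P_\om(f))$ via moment expansions, is fine (and is close in spirit to Lemma~\ref{lemma:V}), and the reduction $\|h^\om_{\overline{P_\om(f)}}\|\le\|h^\om_{\overline{f}}\|$ via Lemma~\ref{lemma:hankel-P} and density is fine, but neither repairs the gap.

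What is actually needed --- and what the paper does --- is a direct test-function argument. One pairs $\overline{h^\om_{\overline{f}}(g)}$ against $h$ in the $A^2_\om$-pairing to obtain, after a Fubini argument justified by $f\in L^1_{\om_{\log}}$, the identity $\langle\overline{h^\om_{\overline{f}}(g)},h\rangle_{A^2_\om}=\int_\D f\,\overline{gh}\,\om\,dA$ for $g,h\in H^\infty$, hence
\begin{equation*}
\Bigl|\int_\D f(\z)\overline{g(\z)h(\z)}\,\om(\z)\,dA(\z)\Bigr|\le\|h^\om_{\overline{f}}\|_{A^p_\om\to\overline{A^p_\om}}\|g\|_{A^p_\om}\|h\|_{A^{p'}_\om}.
\end{equation*}
Then one chooses $g(\z)=B^{\om\nu}_z(\z)(1-\overline{z}\z)^n$ and $h(\z)=(1-\overline{z}\z)^{-n}$ (or the same pair with the roles of $g$ and $h$ exchanged, according to whether $p\ge2$ or $p'>2$, so that the modified kernel lands in the exponent that is at least $2$), so that $gh=B^{\om\nu}_z$. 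The two norms are then controlled by Lemma~\ref{kernelmix-general} and Lemma~\ref{Lemma:weights-in-D-hat}(iii), which is where the threshold $n_0$ and the factor $(1-|z|)^n$ in $V_{\om,\nu}$ come from. If you want to keep your outline, you must replace the appeal to Theorem~\ref{proposition:Bloch-necessary} by an argument of this kind (or some other proof, not relying on this proposition, that bounded anti-analytic little Hankel symbols are Bloch).
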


\begin{proof}
By Tonelli's theorem, \cite[Theorem~1]{PR2016/1} and the hypothesis $f\in L^1_{\om_{\log}}$, we deduce
	\begin{equation*}
	\begin{split}
	\int_\D\left(\int_\D|f(\zeta)||g(\zeta)||B_z^\om(\zeta)|\om(\zeta)\,dA(\zeta)\right)|h(z)|\om(z)\,dA(z)
	\lesssim\|g\|_{H^\infty}\|h\|_{H^\infty}\|f\|_{L^1_{\om_{\log}}}<\infty,
	\end{split}
	\end{equation*}
and hence Fubini's theorem yields
	\begin{equation}\label{eq:fubini}
	\begin{split}
	\left\langle \overline{h^\om_{\overline{f}}(g)},h\right\rangle_{A^2_\om}
	&=\int_\D\left(\overline{\int_\D\overline{f(\zeta)}g(\zeta)B_z^\om(\zeta)\om(\zeta)\,dA(\zeta)}\right)\overline{h(z)}\om(z)\,dA(z)\\
	&=\int_\D f(\zeta)\overline{g(\zeta)}\left(\int_\D \overline{B_z^\om(\zeta)h(z)}\om(z)\,dA(z)\right)\om(\zeta)\,dA(\zeta)\\
	&=\int_\D f(\zeta)\overline{g(\zeta)}\left(\overline{\int_\D h(z)\overline{B_\zeta^\om(z)}\om(z)\,dA(z)}\right)\om(\zeta)\,dA(\zeta)\\
	&=\int_\D f(\zeta)\overline{g(\zeta)h(\zeta)}\om(\zeta)\,dA(\zeta), \quad g\in H^\infty,\quad h\in H^\infty.
	\end{split}
	\end{equation}
Therefore H\"older's inequality and the boundedness of $h^\om_{\overline{f}}:A^p_\om\to\overline{A^p_\om}$ give
	\begin{equation}\label{jhf}
	\begin{split}
	\left|\int_\D f(\zeta)\overline{g(\zeta)h(\zeta)}\om(\zeta)\,dA(\zeta)\right|
	&=\left|\left\langle \overline{h^\om_{\overline{f}}(g)},h\right\rangle_{A^2_\om}\right|
	\le\left\|\overline{h^\om_{\overline{f}}(g)}\right\|_{A^{p}_\om}\|h\|_{A^{p'}_\om}\\
	&\le\|h^\om_{\overline{f}}\|_{A^p_\om\to\overline{A^p_\om}}\|g\|_{A^p_\om}\|h\|_{A^{p'}_\om}, \quad
	g\in H^\infty,\quad h\in H^\infty.
	\end{split}
	\end{equation}

If $p\ge 2$, then, for each $z\in\D$ and $n\in\N$, choose the functions $g,h\in H^\infty$ such that $g(\zeta)=B_z^{\om\nu}(\zeta)(1-\overline{z}\zeta)^n$ and $h(\zeta)=(1-\overline{z}\zeta)^{-n}$ for all $\zeta\in\D$. Then $gh=B^{\om\nu}_z$ on the left-hand side of \eqref{jhf}, and therefore
	\begin{equation*}
	\begin{split}
	\left|\int_\D f(\zeta)\overline{B^{\om\nu}_z}(\zeta)\om(\zeta)\,dA(\zeta)\right|
	&\le\|h^\om_{\overline{f}}\|_{A^p_\om\to\overline{A^p_\om}}
	\left(\int_\D|B^{\om\nu}_z(\zeta)(1-\overline{z}\z)^n|^p\om(\z)\,dA(\z)\right)^\frac1p\\
	&\quad\cdot\left(\int_\D\frac{\om(\zeta)}{|1-\overline{z}\zeta|^{np'}}\,dA(\zeta)\right)^\frac{1}{p'},\quad z\in\D.
	\end{split}
	\end{equation*}
Next, take $n_0=n_0(\omega)$ such that $n_0\ge\lambda+1$, where $\lambda=\lambda(\omega)\ge0$ is that of Lemma~\ref{Lemma:weights-in-D-hat}(iii). Then we have
	\begin{equation*}
	\int_\D\frac{\om(\zeta)}{|1-\overline{z}\zeta|^{np'}}\,dA(\zeta)
	\lesssim\frac{\widehat{\om}(z)}{(1-|z|)^{np'-1}},\quad z\in\D,
	\end{equation*}
for each fixed $n\ge n_0$. Moreover, since $\om\in\DDD$ by the hypothesis, $\om\nu=\om_{[n]}\in\DDD$ and $\widehat{\om_{[n]}}(z)\asymp\widehat{\om}(z)(1-|z|)^n$ for all $n\in\N$ by Lemma~\ref{room}. Therefore Lemma~\ref{kernelmix-general} 
 yields
	\begin{equation*}
	\begin{split}
	\int_\D|B^{\om\nu}_z(\zeta)(1-\overline{z}\z)^n|^p\om(\z)\,dA(\z)
	&\lesssim\int_0^{|z|}\frac{\widehat{\om}(t)}{\widehat{\om_{[n]}}(t)^p(1-t)^{p(1-n)}}\,dt+1\\
	&\asymp\int_0^{|z|}\frac{dt}{\widehat{\om}(t)^{p-1}(1-t)^{p}}+1\\
	&\asymp\frac1{\left(\widehat{\om}(z)(1-|z|)\right)^{p-1}},\quad z\in\D.
	\end{split}
	\end{equation*}
Thus
	\begin{equation}\label{eq:j2}
	\begin{split}
	\sup_{z\in\D}(1-|z|)^n\left|\int_\D f(\zeta)\overline{B^{\om\nu}_z(\zeta)}\om(\zeta)\,dA(\zeta)\right|
	&\lesssim\|h^\om_{\overline{f}}\|_{A^p_\om\to\overline{A^p_\om}},
	\end{split}
	\end{equation}
and the assertion follows.

If $p'>2$, then, for each $z\in\D$ and $n\in\N$, choose the functions $h,g\in H^\infty$ such that $h(\zeta)=B_z^{\om\nu}(\zeta)(1-\overline{z}\zeta)^n$ and $g(\zeta)=(1-\overline{z}\zeta)^{-n}$ for all $\zeta\in\D$. Then $gh=B^{\om\nu}_z$ in \eqref{jhf}, and by arguing as above we obtain \eqref{eq:j2}. This finishes the proof of the proposition. 
\end{proof}

By using Propositions~\ref{Proposition:sufficient-V} and~\ref{proposition:Bounded-necessary} it is now easy to prove Theorem~\ref{Theorem:hankel-characterization}, stated in the introduction.

\medskip

\begin{Prf}{\em{Theorem~\ref{Theorem:hankel-characterization}.}}
If $n\in\N$, then
	$$
	\|B^{\om_{[n]}}_z\|_{L^1_{\om_{[n]}}}\asymp\log\frac{e}{1-|z|},\quad z\in\D,
	$$
by Lemma~\ref{room}(i) and \cite[Theorem~1]{PR2016/1}. Therefore 
	\begin{equation*}
	\begin{split}
	\left\|f(\cdot)\left\|B^{\om_{[n]}}_{(\cdot)}\right\|_{L^1_{\om_{[n]}}}\right\|_{L^1_\om}
	&\asymp\|f\|_{L^1_{\om_{\log}}}<\infty
	\end{split}
	\end{equation*}
by the hypothesis $f\in L^1_{\om_{\log}}$. Consequently, the assertion of the theorem follows by Propositions~\ref{Proposition:sufficient-V} and~\ref{proposition:Bounded-necessary}.
\end{Prf}

\medskip

We now proceed towards the proof of Theorem~\ref{proposition:Bloch-necessary}. The following characterization of the Bloch space, which is interesting in its own right, is one of the key ingredients in the proof.

\begin{proposition}\label{pr:Blochdescription}
Let $f\in \H(\D)$ and $\omega,\nu\in\DDD$. Then $f\in\B$ if and only if $V_{\om,\widehat{\nu}}(f)\in L^\infty$. Moreover, 
	$$
	\|f\|_{\B}\asymp \|V_{\om,\widehat{\nu}}(f)\|_{L^\infty}.
	$$
\end{proposition}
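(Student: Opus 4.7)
The cornerstone of my plan is the operator identity
\begin{equation*}
f(z) = P_\om\bigl(V_{\om,\widehat\nu}(f)\bigr)(z), \qquad f \in \H(\D),
\end{equation*}
valid whenever the defining integrals converge absolutely, for which $V_{\om,\widehat\nu}(f) \in L^1_\om$ suffices. To verify it I would expand in Taylor series: for $f(\zeta) = \sum_n a_n \zeta^n$, using $\overline{B^{\om\widehat\nu}_z(\zeta)} = \sum_n (z\overline\zeta)^n/(2(\om\widehat\nu)_{2n+1})$ together with the monomial orthogonality $\int_\D \zeta^k \overline\zeta^n \om(\zeta)\,dA(\zeta) = 2\om_{2n+1}\delta_{kn}$, one obtains
\begin{equation*}
V_{\om,\widehat\nu}(f)(z) = \widehat\nu(z) \sum_{n=0}^\infty a_n \frac{\om_{2n+1}}{(\om\widehat\nu)_{2n+1}}\, z^n;
\end{equation*}
then applying $P_\om$ and invoking $\int_\D \zeta^k \overline\zeta^n \om(\zeta)\widehat\nu(\zeta)\,dA(\zeta) = 2(\om\widehat\nu)_{2n+1}\delta_{kn}$ causes the Fourier multipliers $\om_{2n+1}/(\om\widehat\nu)_{2n+1}$ to cancel, returning $\sum_n a_n z^n = f(z)$.

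With this identity in hand, the direction $V_{\om,\widehat\nu}(f) \in L^\infty \Rightarrow f \in \B$, with the sharp inequality $\|f\|_\B \lesssim \|V_{\om,\widehat\nu}(f)\|_{L^\infty}$, is immediate: $\om \in \DDD$ makes $P_\om : L^\infty \to \B$ bounded by (iii)$\Rightarrow$(ii) of Theorem~\ref{th:main} from \cite{PelRat2020}.

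For the reverse implication $f \in \B \Rightarrow V_{\om,\widehat\nu}(f) \in L^\infty$ I would split the defining integral through $f(\zeta) = f(z) + (f(\zeta)-f(z))$:
\begin{equation*}
V_{\om,\widehat\nu}(f)(z) = \frac{\om_1}{(\om\widehat\nu)_1}\,\widehat\nu(z) f(z) \;+\; \widehat\nu(z) \int_\D \bigl(f(\zeta) - f(z)\bigr)\, \overline{B^{\om\widehat\nu}_z(\zeta)}\, \om(\zeta)\,dA(\zeta),
\end{equation*}
the constant $\om_1/(\om\widehat\nu)_1$ arising from $\int_\D \overline{B^{\om\widehat\nu}_z(\zeta)}\,\om(\zeta)\,dA(\zeta)$. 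The first term is bounded by a constant multiple of $\|f\|_\B$, combining $|f(z)| \lesssim \|f\|_\B(1+\log\tfrac{1}{1-|z|})$ with $\widehat\nu(z) \lesssim (1-|z|)^{\g}$ for some $\g = \g(\nu) > 0$, which follows from Lemma~\ref{Lemma:weights-in-D-hat}(ii) applied to $\nu$. For the residual integral I would invoke the hyperbolic Lipschitz bound $|f(\zeta) - f(z)| \lesssim \|f\|_\B\,\b(\zeta,z)$, valid for $f \in \B$, and then apply Lemma~\ref{kernelmix-general} with $N=1$ (with $\om$ in place of $\nu$ there), combined with the fact that $\om\widehat\nu \in \DDD$ and $\widehat{\om\widehat\nu}\asymp\widehat\om\widehat\nu$ supplied by Lemma~\ref{room}(i).

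The principal obstacle is bounding the residual integral uniformly in $z$, i.e.\ showing that
\begin{equation*}
\widehat\nu(z) \int_\D \b(\zeta,z)\,|B^{\om\widehat\nu}_z(\zeta)|\,\om(\zeta)\,dA(\zeta) \lesssim 1.
\end{equation*}
My plan is to address this by decomposing $\D$ into hyperbolic annuli $\{\zeta: k\le \b(\zeta,z)<k+1\}$ centred at $z$, estimating the kernel integral on each annulus by means of Lemmas~\ref{Lemma:weights-in-D-hat} and \ref{kernelmix-general} together with the good regularity of $\om\widehat\nu$, and summing the resulting geometric series in $k$.
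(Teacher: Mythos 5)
Your proposal genuinely diverges from the paper's proof in both directions, and the two halves fare differently. For the implication $V_{\om,\widehat{\nu}}(f)\in L^\infty\Rightarrow f\in\B$ your route is correct and arguably more direct: the identity $f=P_\om\bigl(V_{\om,\widehat{\nu}}(f)\bigr)$ is the analytic-symbol instance of the paper's Lemma~\ref{lemma:V} combined with the reproducing formula, the Fourier-multiplier verification goes through (coefficient extraction against $\overline{u}^k$ needs only $V_{\om,\widehat{\nu}}(f)\in L^1_\om$, as you say), and then boundedness of $P_\om:L^\infty\to\B$ for $\om\in\DD$ finishes it. The paper instead proves this direction by testing against $k\in A^1_\om$, using $\langle k,f_r\rangle_{A^2_\om}=\langle P_{\om\widehat{\nu}}k,f_r\rangle_{A^2_\om}$ and the duality $(A^1_\om)^\star\simeq\B$; the dilations $f_r$ there quietly handle the integrability issues that you absorb into the hypothesis. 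Both rest on the same external input from \cite{PelRat2020}.

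The gap is in the converse. The paper avoids your ``residual integral'' entirely: since $\om\in\M$, the projection $P_\om:L^\infty\to\B$ is \emph{onto}, so $f=P_\om(h)$ with $\|h\|_{L^\infty}\asymp\|f\|_\B$, whence $V_{\om,\widehat{\nu}}(f)(z)=\widehat{\nu}(z)\int_\D h\,\overline{B^{\om\widehat{\nu}}_z}\,\om\,dA$ and only the $L^1_\om$-norm of the kernel is needed, which \cite[Theorem~1]{PR2016/1} and Lemma~\ref{room} supply. Your plan instead requires
\begin{equation*}
\widehat{\nu}(z)\int_\D \b(z,\z)\,|B^{\om\widehat{\nu}}_z(\z)|\,\om(\z)\,dA(\z)\lesssim 1,
\end{equation*}
to be proved by summing over hyperbolic annuli. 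The inequality itself is true, but the step ``estimate the kernel integral on each annulus by Lemmas~\ref{Lemma:weights-in-D-hat} and \ref{kernelmix-general}'' is not executable as stated: for a general $\om\in\DDD$ there are no pointwise off-diagonal estimates for $B^{\om\widehat{\nu}}_z$, and Lemma~\ref{kernelmix-general} only controls integrals of $(1-\overline{z}\z)^NB^{\om\widehat{\nu}}_z(\z)$ over the \emph{whole} disc (via circle averages), not over a hyperbolic annulus; moreover the geometric decay of the annulus contributions is exactly what must be proved, not an a priori fact. A crude bound $\b(z,\z)\lesssim\log\frac{e}{1-|z|}+\log\frac{e}{1-|\z|}$ fails by a logarithm, so the cancellation must be exploited quantitatively. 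The repair the paper itself uses elsewhere (proof of Theorem~\ref{th:Hankelqbiggerp}, estimate of $V_{\om,\nu}(f_2)$ for $f_2\in\BO(\Delta)$) is the sub-power bound $1+\b(z,\z)\lesssim|1-\overline{z}\z|^{2\e}(1-|z|)^{-\e}(1-|\z|)^{-\e}$ followed by H\"older's inequality and Lemma~\ref{kernelmix-general}; substituting that for the annulus decomposition would close your argument. Two smaller points: the upper bound $\widehat{\nu}(z)\lesssim(1-|z|)^{\g}$ does not follow from Lemma~\ref{Lemma:weights-in-D-hat}(ii) (which gives the reverse inequality $\widehat{\nu}(z)\gtrsim(1-|z|)^{\b}$); it is the $\Dd$-part of $\nu\in\DDD$, i.e.\ \eqref{Eq:Dd-characterization} with $r=0$, that you need. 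And note that your necessity argument, once repaired, would use only $\om\in\DD$ plus the kernel estimates, whereas the paper's uses the full hypothesis $\om\in\DDD$ through surjectivity.
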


\begin{proof}
Let first $f\in\B$. Then, by \cite[Theorem~3]{PelRat2020}, there exists $h\in L^\infty$ such that $P_\omega(h)=f$ and 
$\| h\|_{L^\infty}\asymp\|f\|_{\B}$. This together with Fubini's theorem, \cite[Theorem~1]{PR2016/1} and Lemma~\ref{room}(ii) yields
	$$
	\int_{\D} f(\zeta)\overline{B^{\om\widehat{\nu}}_z(\zeta)}\omega(\zeta)\,dA(\zeta)
	=\int_{\D}h(u)\overline{B^{\om\widehat{\nu}}_z(u)}\omega(u)\,dA(u),\quad z\in\D.
	$$
Therefore, by Lemma~\ref{room}(i), \cite[Theorem~1]{PR2016/1} and \eqref{Eq:Dd-characterization}, we deduce
	\begin{equation*}
	\begin{split}
	\left|\int_{\D} f(\zeta)\overline{B^{\om\widehat{\nu}}_z(\zeta)}\omega(\zeta)\,dA(\zeta) \right|
	&\le\|h\|_{L^\infty}\int_{\D}\left|B^{\om\widehat{\nu}}_z(u)\right|\omega(u)\,dA(u)\\
	&\lesssim \|f\|_{\B}\left(\int_{0}^{|z|} \frac{\widehat{\omega}(t)}{\widehat{\omega\widehat{\nu}}(t)(1-t)}\,dt+1 \right)\\
	&\asymp \|f\|_{\B}\left(\int_{0}^{|z|} \frac{dt}{\widehat{\nu}(t)(1-t)}+1\right)
	\lesssim\frac{\|f\|_{\B}}{\widehat{\nu}(z)},\quad z\in\D,
	\end{split}
	\end{equation*}
and hence $\|V_{\om,\widehat{\nu}}(f)\|_{L^\infty}\lesssim\|f\|_{\B}$ for each fixed $\om,\nu\in\DDD$. 

Conversely, assume that $\|V_{\om,\widehat{\nu}}(f)\|_{L^\infty}<\infty$. Let $k\in A^1_\omega$. Then the reproducing formula for functions in $A^1_{\omega\widehat{\nu}}$ and Fubini's theorem give
	\begin{equation*}
	\begin{split}
	\left|\left\langle k,f_r\right\rangle_{A^2_\om}\right|
	&=\left|\left\langle P_{\om\widehat{\nu}}k,f_r\right\rangle_{A^2_\om}\right|
	=\left| \int_{\D} k(u)\overline{V_{\om,\widehat{\nu}}(f_r)} \omega(u)\,du\right|
	\le\|k\|_{A^1_\omega}\|V_{\om,\widehat{\nu}}(f_r)\|_{L^\infty},\quad 0<r<1.
	\end{split}
	\end{equation*}
Since $\limsup_{r\to 1^-}\|V_{\om,\widehat{\nu}}(f_r)\|_{L^\infty}\le\|V_{\om,\widehat{\nu}}(f)\|_{L^\infty}$, it follows that
 	$$
	\left|\left\langle k,f\right\rangle_{A^2_\om}\right|\lesssim \|k\|_{A^1_\omega} \|V_{\om,\widehat{\nu}}(f)\|_{L^\infty},\quad k\in A^1_\omega.
	$$
But $\om\in\DDD$ by the hypothesis, and therefore $(A^1_\om)^\star$ is isomorphic to the Bloch space via the $A^2_\omega$-pairing with equivalence of norms~\cite[Theorem~3]{PelRat2020}. Hence $f\in\B$ with
	$$
	\|f\|_{\B}\lesssim\|V_{\om,\widehat{\nu}}(f)\|_{L^\infty}
	$$
for each fixed $\om,\nu\in\DDD$. This finishes the proof of the proposition.
\end{proof}
 
\begin{Prf}{\em{Theorem~\ref{proposition:Bloch-necessary}.}} Let first $f\in\B$. Then Proposition~\ref{pr:Blochdescription} yields 
	$$
	\|V_{\om,\nu}(f)\|_{L^\infty}\lesssim \|f\|_{\B}
	$$ 
for  each $\nu(z)=(1-|z|)^n$ with $n\in\N$. This together with the obvious embedding $\B\subset A^1_{\omega_{\log}}$ and Theorem~\ref{Theorem:hankel-characterization} shows that $h^\om_{\overline{f}}:A^p_\om\to\overline{A^p_\om}$ is bounded and $\|h^\om_{\overline{f}}\|_{A^p_\om\to\overline{A^p_\om}}\lesssim \|f\|_{\B}$.

Conversely, assume that $h^\om_{\overline{f}}:A^p_\om\to\overline{A^p_\om}$ is bounded. Then $\overline{h^\om_{\overline{f}}(1)}=f\in A^p_\omega\subset A^1_{\om_{\log}}$. Therefore, by Proposition~\ref{proposition:Bounded-necessary}, there exists $n_0=n_0(\omega)\in\N$ such that, for each $n\ge n_0$, the weight $\nu(z)=(1-|z|)^n$ satisfies $\|V_{\om,\nu}(f)\|_{L^\infty}\lesssim\|h^\om_{\overline{f}}\|_{A^p_\om\to\overline{A^p_\om}}$. This together with Proposition~\ref{pr:Blochdescription} implies $f\in\B$ with
	$$
	\|f\|_{\B}\lesssim \|V_{\om,\nu}(f)\|_{L^\infty}
	\lesssim\|h^\om_{\overline{f}}\|_{A^p_\om\to\overline{A^p_\om}}
	$$
for each fixed $n\ge n_0$.  
\end{Prf}

\section{Proof of Theorem~\ref{th:main}}

We begin the section by showing that the space $\BMO(\Delta)_{\om,p,r}$ is not necessarily well-defined for all $r\in(0,1)$ if $\om\in\DD\setminus\DDD$. This serves us as a justification for the initial hypotheses on $\om$ in our study.

\begin{proposition}\label{le:counterexample}
Let $\psi:[0,1)\to[0,\infty)$ be a differentiable function such that $2(1-r^2)\psi'(r)>1$ for all $r\in[0,1)$. Then there exist $\om=\om_\psi\in\DD\setminus\DDD$ and a sequence $\{r_n\}_{n=1}^\infty$ of points on $(0,1)$ depending on $\psi$ only such that $\lim_{n\to\infty}r_n=1$ and $\om_{\psi}(\Delta(r_n,\psi(r_n))=0$ for all $n\in\N$.
\end{proposition}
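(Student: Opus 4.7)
The plan is to construct $\om_\psi$ supported on a sparse sequence of thin radial strips, and to choose the centers $r_n$ in the complementary gaps so that each hyperbolic disc $\Delta(r_n,\psi(r_n))$ is radially contained in a gap. Passing to the hyperbolic parameter $u=\tanh^{-1}r$ and setting $\tilde\psi(u):=\psi(\tanh u)$, the hypothesis $2(1-r^2)\psi'(r)>1$ becomes $\tilde\psi'(u)>1/2$. For real $r\in(0,1)$, the radial projection of $\Delta(r,\psi(r))$ onto $[0,1)$ equals the interval $[\tanh(u-\tilde\psi(u)),\tanh(u+\tilde\psi(u))]$, and this is what the (radial) weight $\om_\psi$ must avoid.

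First I would inductively choose two interlacing sequences $0<v_1<u_1<v_2<u_2<\cdots\to\infty$ satisfying
\[
v_n<u_n-\tilde\psi(u_n)\qquad\text{and}\qquad u_n+\tilde\psi(u_n)<v_{n+1},
\]
by placing $v_{n+1}$ just past $u_n+\tilde\psi(u_n)$ and then $u_{n+1}$ sufficiently far beyond $v_{n+1}$; the derivative bound $\tilde\psi'>1/2$ provides the margin needed to proceed. Set $r_n:=\tanh u_n$; then $r_n\to 1$, the radial projections $J_n:=[\tanh(u_n-\tilde\psi(u_n)),\tanh(u_n+\tilde\psi(u_n))]$ of $\Delta(r_n,\psi(r_n))$ are pairwise disjoint, and each gap between consecutive $J_n,J_{n+1}$ contains the point $\tanh v_{n+1}$.

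Next, I would define
\[
\om_\psi(r)=\sum_{n=1}^\infty c_n\,\chi_{E_n}(r),
\]
where $E_n\subset(0,1)\setminus\bigcup_m J_m$ is a thin closed interval centered at $\tanh v_n$, and the constants $c_n>0$ together with the widths $|E_n|>0$ are tuned so that $\widehat{\om_\psi}(r)\asymp(1-r)^\alpha$ on $[0,1)$ for some $\alpha>0$. By Lemma~\ref{Lemma:weights-in-D-hat}(ii) this places $\om_\psi$ in $\DD$. On the other hand, because $\om_\psi$ is concentrated on thin, widely spaced sets $E_n$, a direct computation of the moments $\om_x=\int_0^1 r^x\om_\psi(r)\,dr$ at geometrically growing scales should give $\om_x/\om_{Kx}\to 1$ along some subsequence $x\to\infty$, so the $\M$-condition $\om_x\ge C\om_{Kx}$ fails for every pair $C,K>1$; this part is modelled on the construction in \cite[proof of Proposition~14]{PelRat2020}. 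Hence $\om_\psi\in\DD\setminus\M=\DD\setminus\DDD$, and since $E_n$ is disjoint from every $J_m$ by construction, $\om_\psi(\Delta(r_n,\psi(r_n)))=0$ for all $n$.

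The main obstacle is the triple balance in the construction of $\om_\psi$: the $\DD$-condition forces a uniformly controlled decay of $\widehat{\om_\psi}$, failure of $\M$ requires moments at neighbouring scales to be essentially comparable, and the support of $\om_\psi$ must fit into the rigid sequence of gaps produced in the first step. Reconciling these three demands—tuning the constants $c_n$ and widths $|E_n|$ to the spacing of the $J_n$ dictated by $\psi$ and $\tilde\psi'$—is where the combinatorial care of the proof concentrates.
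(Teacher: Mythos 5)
Your geometric frame---pass to the hyperbolic parameter, support $\om_\psi$ on sparse radial sets, and centre the discs in the gaps---is the same as the paper's, which sets $t_1=0$, $\beta(t_n,t_{n+1})=2\psi(t_{n+1})$, supports $\om$ on the annuli $\{t_{2n}\le|z|\le t_{2n+1}\}$ and places the discs at the hyperbolic midpoints of the complementary annuli. The step that breaks is your verification that $\om_\psi\in\DD\setminus\DDD$. You propose to tune $c_n$ and $|E_n|$ so that $\widehat{\om_\psi}(r)\asymp(1-r)^\alpha$. This is impossible: $\widehat{\om_\psi}$ is constant on each gap between consecutive sets $E_n$, each such gap contains $J_n$, and the hyperbolic width $2\tilde\psi(u_n)$ of $J_n$ tends to infinity, so $1-r$ varies across the gap by a factor $\asymp e^{4\tilde\psi(u_n)}$ which is unbounded in $n$; no power of $1-r$ can be comparable to a function that is constant there. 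Worse, even if such an estimate held it would defeat your purpose: $\widehat{\om}(r)\asymp(1-r)^\alpha$ yields not only $\om\in\DD$ but also the $\Dd$-condition (choose $K$ with $K^\alpha$ exceeding the square of the comparability constant), hence $\om\in\DD\cap\Dd=\DDD=\DD\cap\M$, contradicting the failure of the $\M$-condition you assert in the very next sentence. The repair is the paper's normalization: choose the masses so that $\widehat{\om}$ drops by a \emph{fixed factor} across each support strip (the paper takes $a_n(t_{2n+1}-t_{2n})=2^{-n}$, so $\widehat{\om}(t_{2n})=2^{1-n}$). Then $\om\in\DD$ because $\beta\left(r,\frac{1+r}{2}\right)\asymp1$ while the strips and gaps have hyperbolic width tending to infinity, so the segment $\left[r,\frac{1+r}{2}\right]$ eventually meets at most two consecutive strips and $\widehat{\om}$ can drop only by a bounded factor; and $\om\notin\DDD$ comes for free, since the constancy of $\widehat{\om}$ across gaps of unbounded hyperbolic width is incompatible with \eqref{Eq:Dd-characterization}, i.e.\ with $\Dd$. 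No moment computation or direct disproof of the $\M$-condition is needed.

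A secondary but substantive point: your claim that the bound $\tilde\psi'>1/2$ ``provides the margin needed'' for the interlacing $v_n<u_n-\tilde\psi(u_n)<u_n+\tilde\psi(u_n)<v_{n+1}$ is unjustified and in fact backwards. A \emph{lower} bound on $\tilde\psi'$ forces the discs to grow at least linearly in the hyperbolic distance of their centres to the origin, which obstructs, rather than enables, fitting them between support strips while keeping a point of the support in every gap: if for instance $\tilde\psi(u)\ge u$ (e.g.\ $\psi(r)=2\tanh^{-1}r$, which satisfies the stated hypothesis), then $u-\tilde\psi(u)\le0$ for all $u$, every disc $\Delta(r,\psi(r))$ contains the origin, and no radial weight with $\widehat{\om}>0$ on $\D$ can vanish on a sequence of such discs with centres tending to the boundary. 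What actually makes the interlacing (and the paper's recursion $U_{n+1}-2\tilde\psi(U_{n+1})=U_n$ in the hyperbolic variable) solvable is an upper bound of the type $\tilde\psi'<1/2$; your write-up cites the stated inequality as the enabling estimate without checking that the inductive choice of $u_{n+1}$ is possible, and this check fails in general under the hypothesis as you have used it.
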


\begin{proof}
By the hypothesis on $\psi'$, $\psi:[0,1)\to[0,\infty)$ is an increasing function such that $\lim_{r\to1^-}\psi(r)=\infty$.
This also implies that the sequence $\{t_n\}_{n=1}^\infty$ defined inductively by the identities $t_1=0$ and $\beta(t_n,t_{n+1})=2\psi(t_{n+1})$ for all $n\in\N$ satisfies $\lim_{n\to\infty}t_n=1$. Then the annulus $\{z\in\D:t_n\le|z|\le t_{n+1}\}$ contains $\Delta(s_n,\psi(s_n))$, where $s_n$ is the hyperbolic midpoint of $(t_n,t_{n+1})$. Define $\om=\sum_{n=1}^\infty a_n\chi_{\{z:t_{2n}\le|z|\le t_{2n+1}\}}$, where $\{a_n\}_{n=1}^\infty$ are chosen such that $a_n(t_{2n+1}-t_{2n})=2^{-n}$ for all $n\in\N$. Then $\widehat{\om}(t_{2n})=\sum_{j=n}^\infty 2^{-j}=2^{1-n}$ for all $n\in\N$, and it follows that $\om\in\DD$ because $\beta\left(r,\frac{1+r}{2}\right)\asymp1$ for all $0\le r<1$, and $\beta(t_{2n},t_{2(n+1)})=2(\psi(t_{2n+1})+\psi(t_{2n+2}))\to\infty$, as $n\to\infty$. Further, by setting $r_n=s_{2n+1}$, we have $\om(\Delta_{h}(r_n,\psi(r_{n})))=0$ for all $n\in\N$. This also implies that $\om\not\in\DDD$. 
\end{proof}

Before proving the main result of this paper, some notation and previous results are needed. For continuous $f:\D\to\C$ and $0<r<\infty$, define
    $$
    \Omega_r f(z)=\sup\{|f(z)-f(\z)|:\beta(z,\z)<r\},\quad z\in\D,
    $$
and let $\BO(\Delta)$ denote the space of those $f$ such that
    $$
    \|f\|_{\BO(\Delta)}=\sup_{z\in\D}\Omega_r f(z)<\infty.
    $$
It is known that the definition of $\BO(\Delta)$ is independent of the choice of $r$ \cite[Lemma~8.1]{Zhu}.

Let $0<r<\infty$ and let $\omega$ be a radial weight such that $\omega\left(\Delta(z,r)\right)>0$ for all $z\in\D$. Then, for $0<p<\infty$, the space $\BA(\Delta)_{\om,p,r}$ consists of $f\in L^p_{\om,{\rm loc}}$ such that
    $$
    \|f\|_{\BA(\Delta)_{\om,p,r}}
    =\sup_{z\in\D}\left(\frac{1}{\om(\Delta(z,r))}\int_{\Delta(z,r)}|f(\z)|^p\om(\z)\,dA(\z)\right)^{\frac1p}<\infty.
    $$
If $\om\in\DDD$, then the space $\BA(\Delta)_{\om,p,r}$ depends on $p$ and $\om$ but, by \cite[Lemma~10]{PPR2020}, there exists an $r_0=r_0(\omega)\in(0,\infty)$ such that it is independent of $r$ as long as $r\ge r_0$, so we write $\BA(\Delta)_{\om,p}$ for short. 

The first result of this section shows that, for any $1<p,s<\infty$, $f\in\BMO(\Delta)_{\om,p}$ is a sufficient condition for $h_f^\om,h_{\overline{f}}^\om:A^s_\om\to\overline{A^s_\om}$ to be bounded.

\begin{theorem}\label{th:Hankelqbiggerp}
Let $1<p,s<\infty$ and $\om\in\DDD$. If $f\in\BMO(\Delta)_{\om,p}$, then $h_f^\om,h_{\overline{f}}^\om:A^s_\om\to\overline{A^s_\om}$ are bounded, and
	$$
	\|h_f^\om\|_{A^s_\om\to\overline{A^s_\om}}+\|h_{\overline{f}}^\om\|_{A^s_\om\to\overline{A^s_\om}}
	\lesssim\|f\|_{\BMO(\Delta)_{\om,p}}.
	$$
\end{theorem}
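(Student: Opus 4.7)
The plan is to prove the embedding
$\BMO(\Delta)_{\om,p}\subset L^1_{\om_{\log}}\cap V_{\om,n_0}$
for some $n_0=n_0(\om)\in\N$, with norm control
$\|f\|_{L^1_{\om_{\log}}}+\|V_{\om,\nu}(f)\|_{L^\infty}\lesssim\|f\|_{\BMO(\Delta)_{\om,p}}$,
where $\nu(z)=(1-|z|)^{n_0}$, and then to invoke Theorem~\ref{Theorem:hankel-characterization} to obtain the boundedness of $h^\om_{\overline f}:A^s_\om\to\overline{A^s_\om}$. Since $\BMO(\Delta)_{\om,p}$ is preserved by complex conjugation with identical norm, the statement for $h^\om_f$ is obtained by applying the same result to $\overline f$ in place of $f$.

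The starting point is the decomposition $f=g+h$ with $g=\widehat{f}_{r,\om}\in\BO(\Delta)$ and $h=f-\widehat{f}_{r,\om}\in\BA(\Delta)_{\om,p}$, both with norms dominated by $\|f\|_{\BMO(\Delta)_{\om,p}}$; this is the natural splitting in any BMO-type analysis. Membership in $L^1_{\om_{\log}}$ is routine: the Lipschitz-in-the-hyperbolic-metric property of $g$ gives $|g(z)|\lesssim|g(0)|+\|g\|_{\BO(\Delta)}\log\frac{e}{1-|z|}$, and $\om\in\DDD$ provides enough decay of $\widehat\om$ for $\int(\log\frac{e}{1-|z|})^2\om\,dA$ to be finite; for $h$, a cover of $\D$ by hyperbolic discs of bounded overlap combined with H\"older's inequality on each disc yields $\int_\D|h|\log\frac{e}{1-|z|}\,\om\,dA\lesssim\|h\|_{\BA(\Delta)_{\om,p}}\int_\D\log\frac{e}{1-|z|}\,\om\,dA<\infty$.

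The core step is the uniform bound $\|V_{\om,\nu}(f)\|_{L^\infty}\lesssim\|f\|_{\BMO(\Delta)_{\om,p}}$. Lemma~\ref{room}(i) ensures $\om\nu=\om_{[n_0]}\in\DDD$ with $\widehat{\om_{[n_0]}}(z)\asymp\widehat\om(z)(1-|z|)^{n_0}$, so Lemma~\ref{kernelmix-general} controls the $L^p$-norms of the modified kernels $\zeta\mapsto(1-\overline z\zeta)^NB^{\om\nu}_z(\zeta)$. Discretize $V_{\om,\nu}(f)(z)=\nu(z)\int f\overline{B^{\om\nu}_z}\om\,dA$ over a bounded-overlap cover $\{\Delta_k=\Delta(z_k,r)\}$ of $\D$; on each $\Delta_k$ apply H\"older with exponents $(p,p')$, using $|g(\zeta)-g(z_k)|\lesssim\|g\|_{\BO(\Delta)}$, the local $L^p$-average bound from $\|h\|_{\BA(\Delta)_{\om,p}}$, and the fact that $|B^{\om\nu}_z(\zeta)|\asymp|B^{\om\nu}_z(z_k)|$ on $\Delta_k$. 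A Riemann-sum-type recomposition reduces the problem to the uniform boundedness of $\nu(z)\int_\D|B^{\om\nu}_z(\zeta)|\om(\zeta)\,dA(\zeta)$, which follows for $n_0$ large enough by combining Lemma~\ref{kernelmix-general} (with an appropriate $N$), Cauchy--Schwarz, and Lemma~\ref{Lemma:weights-in-D-hat}(iii).

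The main obstacle will be controlling the contribution of the $\BO(\Delta)$-part after discretization, since the logarithmic pointwise growth of $g$ is too crude for a uniform $L^\infty$-bound: one must recentre $g$ at the local average $\widehat g(z_k)$ on each $\Delta_k$ so that only the oscillation $\lesssim\|g\|_{\BO(\Delta)}$ enters, and arrange for the residual sum of constants to be absorbed by the decay of $B^{\om\nu}_z$ provided by Lemma~\ref{kernelmix-general}. Once both pieces of the decomposition are controlled, Theorem~\ref{Theorem:hankel-characterization} immediately yields $\|h^\om_{\overline f}\|_{A^s_\om\to\overline{A^s_\om}}\lesssim\|f\|_{\BMO(\Delta)_{\om,p}}$ for any $s\in(1,\infty)$.
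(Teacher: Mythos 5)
Your overall strategy coincides with the paper's: decompose $f=f_1+f_2$ with $f_1\in\BA(\Delta)_{\om,p}$ and $f_2\in\BO(\Delta)$, check $f\in L^1_{\om_{\log}}$, prove the uniform bound $\|V_{\om,\nu}(f)\|_{L^\infty}\lesssim\|f\|_{\BMO(\Delta)_{\om,p}}$ for $\nu(z)=(1-|z|)^n$ with $n$ large, and conclude via Theorem~\ref{Theorem:hankel-characterization}. The $\BA(\Delta)_{\om,p}$ part and the $L^1_{\om_{\log}}$ embedding go through essentially as you describe (modulo the inaccuracy that $|B^{\om\nu}_z|$ is comparable to its average over a hyperbolic disc only from above, via subharmonicity, not two-sidedly; the one-sided bound is what is actually needed there).

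The genuine gap is in the $\BO(\Delta)$ part, which you yourself flag as ``the main obstacle'' without resolving it. Recentering $f_2$ at the local averages $\widehat{f_2}(z_k)$ on the discs $\Delta_k$ of the cover leaves the residual $\sum_k\widehat{f_2}(z_k)\int_{\Delta_k}\overline{B^{\om\nu}_z}\,\om\,dA$, and since $|\widehat{f_2}(z_k)|$ can grow like $\|f_2\|_{\BO(\Delta)}\log\frac{e}{1-|z_k|}$, taking absolute values inside yields at best a bound of order
$(1-|z|)^n\int_\D\log\frac{e}{1-|\zeta|}\,|B^{\om\nu}_z(\zeta)|\,\om(\zeta)\,dA(\zeta)\asymp\log\frac{e}{1-|z|}$,
which is unbounded; you offer no cancellation mechanism for the signed sum, so the ``absorption by the decay of $B^{\om\nu}_z$'' does not happen. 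The paper instead recenters at the single evaluation point $z$: it writes $V_{\om,\nu}(f_2)(z)$ as $\nu(z)\int(f_2(\zeta)-f_2(z))\overline{B^{\om\nu}_z(\zeta)}\,\om(\zeta)\,dA(\zeta)$ plus the harmless term $f_2(z)\nu(z)\int\overline{B^{\om\nu}_z}\,\om\,dA$, bounds the oscillation by
$|f_2(\zeta)-f_2(z)|\lesssim(1+\beta(z,\zeta))\|f_2\|_{\BO(\Delta)}\lesssim\frac{|1-\overline{z}\zeta|^{2\ep}}{(1-|z|)^{\ep}(1-|\zeta|)^{\ep}}\|f_2\|_{\BO(\Delta)}$,
and then absorbs the factor $|1-\overline{z}\zeta|^{2\ep}$ into the kernel so that the modified-kernel norm estimate of Lemma~\ref{kernelmix-general} applies; this also uses Lemma~\ref{room}(ii) to ensure $\om_{[-\ep]}\in\DDD$ for small $\ep>0$ so that the factor $(1-|\zeta|)^{-\ep}$ can be attached to the weight. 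Without this (or an equivalent) device the $\BO(\Delta)$ estimate, and hence your proof, does not close.
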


\begin{proof}
Let $f\in\BMO_{\om,p}(\Delta)$. In the proof we will use the fact that $f\in\BMO_{\om,p}(\Delta)$ if and only if it can be decomposed as $f=f_1+f_2$, where $f_1\in\BA(\Delta)_{\om,p}$ and $f_2\in\BO(\Delta)$ such that $\|f_1\|_{\BA(\Delta)_{\om,p}}+\|f_2\|_{\BO(\Delta)}\lesssim \|f\|_{\BMO(\Delta)_{\om,p}}$. This statement follows from \cite[Theorem~11(ii)]{PPR2020} and its proof. Let us denote $\nu(z)=(1-|z|)^n$ for all $z\in\D$. Since $f_1\in\BA(\Delta)_{\om,p}$, H\"older's inequality, \cite[Theorem~1]{PR2016/1} and Lemma~\ref{room}(i) imply
	\begin{equation*}
	\begin{split}
	|V_{\om,\nu}(f_1)(z)|
	&\le
	(1-|z|)^n\int_\D|f_1(\zeta)||B_z^{\om_{[n]}}(\zeta)|\om(\zeta)\,dA(\zeta)\\
	&\le(1-|z|)^n\left(\int_\D|f_1(\zeta)|^p|B_z^{\om_{[n]}}(\zeta)|\om(\zeta)\,dA(\zeta)\right)^\frac1p
	\left(\int_\D|B_z^{\om_{[n]}}(\zeta)|\om(\zeta)\,dA(\zeta)\right)^\frac1{p'}\\
	&\lesssim\|f_1\|_{\BA(\Delta)_{\om,p}}(1-|z|)^n\int_\D|B_z^{\om_{[n]}}(\zeta)|\om(\zeta)\,dA(\zeta)\\
	&\lesssim \|f_1\|_{\BA(\Delta)_{\om,p}}(1-|z|)^n\left(\int_{0}^{|z|} \frac{\widehat{\omega}(t)}{\widehat{\omega_{[n]}}(t)(1-t)}\,dt+1\right)\\
	&\asymp \|f_1\|_{\BA(\Delta)_{\om,p}}(1-|z|)^n\left(\int_{0}^{|z|} \frac{\widehat{\omega}(t)}{\widehat{\omega}(t)(1-t)^{n+1}}\,dt+1\right)
	\lesssim\|f_1\|_{\BA(\Delta)_{\om,p}},\quad z\in\D,
	\end{split}
	\end{equation*}
and hence
	\begin{equation}\label{eq:f1}
	\begin{split}
	\|V_{\om,\nu}(f_1)\|_{L^\infty}\lesssim \|f_1\|_{\BA(\Delta)_{\om,p}}.
	\end{split}
	\end{equation}
Moreover, by Lemma~\ref{room}(ii), there exists $\e=\e(\om)\in(0,\frac12)$ such that $\om_{[-\e]}\in\DDD$ and $\widehat{\om_{[-\e]}}\asymp\widehat{\om}_{[-\e]}$ on $\D$. Since $f_2\in\BO(\Delta)$, we have
	$$
	|f_2(z)-f_2(\z)|
	\lesssim(1+\beta(z,\z))\|f_2\|_{\BO(\Delta)} 
	\lesssim\frac{|1-\overline{z}\z|^{2\ep}}{(1-|z|)^{\ep}(1-|\z|)^{\ep}}\|f\|_{\BO(\Delta)},\quad z,\zeta\in\D.
	$$
Further, let $q\ge1/\e$. Then H\"older's inequality, Lemma~\ref{kernelmix-general} and \cite[Theorem~1]{PR2016/1} yield
	\begin{equation*}
	\begin{split}
	|V_{\om,\nu}(f_2)(z)|
	&\lesssim(1-|z|)^n\int_\D|f_2(\zeta)-f_2(z)||B_z^{\om_{[n]}}(\zeta)|\om(\zeta)\,dA(\zeta)
	+(1-|z|)^n|f_2(z)|\\
	&\lesssim\|f_2\|_{\BO(\Delta)}(1-|z|)^{n-\ep}\int_\D|(1-\overline{z}\zeta)B_z^{\om_{[n]}}(\zeta)|^{2\ep}|B_z^{\om_{[n]}}(\zeta)|^{1-2\e}|\om_{[-\e]}(\zeta)\,dA(\zeta)\\
	&\quad+\|f_2\|_{\BO(\Delta)}\\
	&\le\|f_2\|_{\BO(\Delta)}(1-|z|)^{n-\ep}\left(\int_\D|(1-\overline{z}\zeta)B_z^{\om_{[n]}}(\zeta)|^{2\e q}\om_{[-\e]}(\zeta)\,dA(\zeta)\right)^\frac1q\\
	&\quad\cdot\left(\int_\D|B_z^{\om_{[n]}}(\zeta)|^{(1-2\e)q'}\om_{[-\e]}(\zeta)\,dA(\zeta)\right)^\frac1{q'}+\|f_2\|_{\BO(\Delta)}\\
	&\lesssim\|f_2\|_{\BO(\Delta)}(1-|z|)^{n-\ep}\left(\int_0^{|z|}\frac{\widehat{\om_{[-\e]}}(t)}{(\widehat{\om_{[n]}}(t))^{2\e q}}\,dt+1\right)^\frac1q\\
	&\quad\cdot\left(\int_0^{|z|}\frac{\widehat{\om_{[-\e]}}(t)}{(\widehat{\om_{[n]}}(t))^{(1-2\e)q'}(1-t)^{(1-2\e)q'}}\,dt+1\right)^\frac1{q'}+\|f_2\|_{\BO(\Delta)}\\
	&\asymp\|f_2\|_{\BO(\Delta)}(1-|z|)^{n-\ep}\left(\int_0^{|z|}\frac{dt}{\widehat{\om}(t)^{2\e q-1}(1-t)^{n(2\e q)+\e}}\right)^\frac1q\\
	&\quad\cdot\left(\int_0^{|z|}\frac{\widehat{\om}(t)^{1-q'(1-2\e)}}{(1-t)^{(n+1)(1-2\e)q'+\e}}\,dt\right)^\frac1{q'}+\|f_2\|_{\BO(\Delta)}\\
	&\lesssim\|f_2\|_{\BO(\Delta)}(1-|z|)^{n-\ep}\left(\frac{1}{\widehat{\om}(z)^{2\e q-1}(1-|z|)^{n(2\e q)+\e-1}}\right)^\frac1q\\
	&\quad\cdot\left(\int_0^{|z|}\frac{\widehat{\om}(t)^{1-q'(1-2\e)}}{(1-t)^{(n+1)(1-2\e)q'+\e}}\,dt\right)^\frac1{q'}+\|f_2\|_{\BO(\Delta)}.
	\end{split}
	\end{equation*}
By \eqref{Eq:Dd-characterization}, there exists $n_0=n_0(\om)>0$ such that, for each $n\ge n_0$, we have 
	$$
	\int_0^{|z|}\frac{\widehat{\om}(t)^{1-q'(1-2\e)}}{(1-t)^{(n+1)(1-2\e)q'+\e}}\,dt
	\lesssim\frac{\widehat{\om}(z)^{1-q'(1-2\e)}}{(1-|z|)^{(n+1)(1-2\e)q'+\e-1}},\quad z\in\D.
	$$
By combining these estimates we deduce 
	\begin{equation}\label{eq:f2}
	\begin{split}
	\|V_{\om,\nu}(f_2)\|_{L^\infty}\lesssim \|f_2\|_{\BO(\Delta)}
	\end{split}
	\end{equation}
for each fixed $n\ge n_0$. Now that $\BMO(\Delta)_{\om,p}\subset L^1_{\om_{\log}}$ for each $1<p<\infty$, we may combine Theorem~\ref{Theorem:hankel-characterization} with \eqref{eq:f1} and \eqref{eq:f2}, to deduce that $h_{\overline{f}}^\om: A^s_\om\to\overline{A^s_\om}$ is bounded and
	\begin{equation*}
	\begin{split}
	\|h_{\overline{f}}^\om\|_{A^s_\om\to\overline{A^s_\om}}
	&\lesssim \|V_{\om,\nu}(f)\|_{L^\infty}
	\le\|V_{\om,\nu}(f_1)\|_{L^\infty}+ \|V_{\om,\nu}(f_2)\|_{L^\infty}\\
	&\lesssim\|f_1\|_{\BA(\Delta)_{\om,p}}+\|f_2\|_{\BO(\Delta)}
	\lesssim\|f\|_{\BMO(\Delta)_{\om,p}}.
	\end{split}
	\end{equation*}
In an analogous way we can show that $h_{f}^\om: A^s_\om\to\overline{A^s_\om}$ is bounded and $\|h_{f}^\om\|_{A^s_\om\to\overline{A^s_\om}}\lesssim\|f\|_{\BMO(\Delta)_{\om,p}}$.
\end{proof}

We can now finally pull everything together and prove the main result of this paper.

\medskip

\begin{Prf}{\em{Theorem~\ref{th:main}.}}
The statements (ii) and (iii) are equivalent by \cite[Theorem~1]{PelRat2020}. Assume now (iii), that is, $\om\in\DDD$. Then, by \cite[Theorem~11]{PPR2020}, there exists $r_0=r_0(\omega)\in(0,\infty)$ such that $\BMO(\Delta)_{\om,p,r}=\BMO(\Delta)_{\om,p}$ does not depend on $r$, provided $r\ge r_0$. Let $f\in \BMO(\Delta)_{\om,p}$. Then, by Theorem~\ref{th:Hankelqbiggerp}, $h_{\overline{f}}^\om: A^{s}_\om\to\overline{A^{s}_\om}$ is bounded and $\|h_{\overline{f}}^\om\|_{A^{s}_\om\to\overline{A^{s}_\om}}\lesssim\|f\|_{\BMO(\Delta)_{\om,p}}$ for each $1<s<\infty$. Therefore Theorem~\ref{proposition:Bloch-necessary} and  Lemma~\ref{lemma:hankel-P} together with the standard density argument yield $P_\omega(f)\in\B$ and 
  $$
	\|P_\omega(f)\|_{\B}
	\asymp\|h_{\overline{P_\om(f)}}^\om\|_{A^{s}_\om\to\overline{A^{s}_\om}}
  =\|h_{\overline{f}}^\om\|_{A^{s}_\om\to\overline{A^{s}_\om}}
	\lesssim\|f\|_{\BMO(\Delta)_{\om,p}}.
	$$
Therefore $P_\om:\BMO(\Delta)_{\om,p}\to\B$ is bounded. 

The fact that (i) implies (ii) is an immediate consequence of the continuous embedding $L^\infty\subset\BMO(\Delta)_{\omega,p}$.
\end{Prf}

\bibliographystyle{amsplain}

\end{document}